\documentclass[12pt]{amsart}

\usepackage[T1]{fontenc}
\usepackage{amsmath,amssymb,amsthm,mathtools}
\usepackage{enumitem}
\usepackage[plainpages=false,hypertexnames=false,colorlinks=true,linkcolor=blue,citecolor=blue,urlcolor=blue]{hyperref}
\usepackage[
  a4paper,
  width=6.05in,
  height=8.35in,
  hmarginratio=1:1,
  vmarginratio=1:1
]{geometry}

\numberwithin{equation}{section}

\newtheorem{prop}{Proposition}[section]
\newtheorem{thm}[prop]{Theorem}
\newtheorem{lem}[prop]{Lemma}
\newtheorem{cor}[prop]{Corollary}
\theoremstyle{remark}
\newtheorem{rem}[prop]{Remark}

\DeclareMathOperator{\tr}{tr}
\DeclareMathOperator{\osc}{osc}
\DeclareMathOperator{\cof}{cof}
\newcommand{\R}{\mathbb R}
\newcommand{\T}{\mathbb T}

\allowdisplaybreaks
\title[Coupled-drift Monge--Amp\`ere equations]
{Liouville-type theorems for coupled-drift
Monge--Amp\`ere equations}

\author{Ling Wang}
\address{Department of Decision Sciences and BIDSA, Bocconi University, Milano, Italy}
\email{ling.wang@unibocconi.it}

\subjclass[2020]{Primary 35J96, 35B65; Secondary 35B10, 35B45}
\keywords{Monge--Amp\`ere equation, Liouville theorem, periodic corrector,
coupled drift, convex solutions}

\begin{document}

\begin{abstract}
In this paper, we study entire solutions and periodic correctors for the coupled-drift
Monge--Amp\`ere equation
\[
        \det D^2u
        =
        \exp\{-a\cdot Du+b\cdot x+V(x)-c_0\},
        \quad
        D^2u>0.
\]
For \(V\equiv0\), we obtain a sharp classification of the whole-space
solvability regimes: all entire smooth strictly convex solutions are quadratic
when \(a=b=0\); no such solution exists when \(a\neq0\) and
\(a\cdot b\le0\); and non-quadratic entire solutions exist when
\(a=0\) and \(b\neq0\), or when \(a\cdot b>0\).  For the null case
\(a\neq0\), \(a\cdot b=0\), we give a scalar maximum-principle argument in
every dimension \(n\ge2\).

For periodic \(V\), we prove existence and uniqueness of the normalized pair
\((\psi_A,c_A)\) solving the drifted cell problem
\[
        \det(A+D^2\psi)
        =
        \exp\{-a\cdot D\psi+V-c_A\},
        \quad
        A+D^2\psi>0
        \quad\text{on }\mathbb T^n.
\]
We also prove that any asymptotically quadratic entire solution must satisfy
\(b=Aa\).  If its remainder is bounded, then the solution is the corresponding
quadratic-periodic corrector up to an additive constant.
\end{abstract}

\maketitle

\section{Introduction}

In this paper, we study entire smooth strictly convex solutions of the
Monge--Amp\`ere equation
\begin{equation}\label{eq:intro-main}
        \det D^2u
        =
        \exp\{-a\cdot Du+b\cdot x+V(x)-c_0\},
        \quad
        D^2u>0,
        \quad
        x\in\mathbb R^n,
\end{equation}
where \(a,b\in\mathbb R^n\), \(c_0\in\mathbb R\), and \(V\) is either zero or a
smooth periodic function on \(\mathbb T^n\).  When \(V\equiv0\), the Legendre
transform interchanges the two drift terms.  Indeed, if \(y=Du(x)\) and \(f=u^*\), then on the gradient image
\(Du(\mathbb R^n)\),
\[
        \det D^2f
        =
        \exp\{a\cdot y-b\cdot Df+c_0\}.
\]
For the whole-space problem, the sign of \(a\cdot b\) determines the
solvability.  For periodic correctors, the corresponding condition is
\(b=Aa\), where \(A\) is the quadratic part of the solution.

The starting point is the classical rigidity theory for the real
Monge--Amp\`ere equation.  The classical J\"orgens--Calabi--Pogorelov theorem asserts
that every entire smooth strictly convex solution of
\[
        \det D^2u=\mathrm{constant}
        \quad\text{in }\mathbb R^n
\]
is quadratic; see \cite{Jorgens,Calabi,Pogorelov72}.  Cheng and Yau
\cite{ChengYau86} gave an analytic proof in their work on complete affine
hypersurfaces, and Caffarelli and Li \cite{CaffarelliLi03} extended the theory
to viscosity solutions and exterior-domain problems.  Existence and qualitative
questions for entire equations with nonconstant prescribed densities were also
studied by Chou and Wang \cite{ChouWang96}.  For systematic treatments of the real Monge--Amp\`ere equation and its
linearized counterpart, see
\cite{Gutierrez16,Figalli17,Le24}.  

A different but closely related rigidity problem arises in affine
K\"ahler--Ricci flat geometry.  After a Legendre transform, the potential
equation takes the form
\[
        \det D^2u
        =
        \exp\{-d\cdot Du-d_0\},
        \quad x\in\mathbb R^n,
\]
where \(d\in\mathbb R^n\) and \(d_0\in\mathbb R\) are constants.  Li and Xu
\cite{LiXu09} proved that no entire smooth strictly convex solution exists
when \(d\neq0\); equivalently, the only entire smooth strictly convex solutions
in this class occur in the constant-density case.  Xu and Zhu
\cite{XuZhu16} later gave a shorter proof of this result.  These results show
that a linear gradient drift is not an innocuous perturbation of the
constant-density equation.  For the broader affine Bernstein problem and
related Monge--Amp\`ere equations, see
\cite{LiXuSimonJia10,TrudingerWang00,TrudingerWang02,TrudingerWang05,
TrudingerWang08}.

Our first goal is to determine what happens when the gradient drift is coupled
to a spatial drift.  In the constant-medium case of \eqref{eq:intro-main}, namely
\begin{equation}\label{eq:linear-drift}
        \det D^2u
        =
        \exp\{-a\cdot Du+b\cdot x-c_0\},
        \quad
        D^2u>0,
        \quad
        x\in\mathbb R^n.
\end{equation}
This equation also appears in the study of Lagrangian translating solitons in
the pseudo-Euclidean space \(\mathbb R^{2n}_n\); see
Joyce--Lee--Tsui \cite{JoyceLeeTsui10} for related translating solitons in
the Euclidean setting.  Xu and Huang
\cite{XuHuang13} derived the equation for spacelike Lagrangian translating
gradient graphs and proved rigidity under a completeness assumption.  Further
results were obtained in \cite{HuangXu15,XuZhu15,ChenQiu16,XuLiu19}.  In
particular, Xu and Zhu \cite{XuZhu15} classified the one-dimensional entire
solutions and obtained higher-dimensional rigidity under an asymptotic decay
condition on the Hessian.

In the translating-soliton interpretation, the sign of \(a\cdot b\)
corresponds to the causal type of the translating vector, up to the convention
for the null metric.  Wu and Xu \cite{WuXu21} proved nonexistence when
\(a\cdot b<0\).  In dimension two, Song--Wu--Xu \cite{SongWuXu26} proved
nonexistence in the lightlike case when both drift vectors are nonzero.  Our
first result gives the complete solvability alternatives for
\eqref{eq:linear-drift}.

\begin{thm}
\label{thm:linear-dichotomy}
Let \(n\ge1\), \(a,b\in\mathbb R^n\), and \(c_0\in\mathbb R\).  Consider
\eqref{eq:linear-drift}.  Then the following statements hold.
\begin{enumerate}
\item[\textnormal{(i)}]
If \(a=b=0\), then every entire smooth strictly convex solution of
\eqref{eq:linear-drift} is a quadratic polynomial.

\item[\textnormal{(ii)}]
If \(a\neq0\) and \(a\cdot b\le0\), then \eqref{eq:linear-drift} admits no
entire smooth strictly convex solution.

\item[\textnormal{(iii)}]
If either \(a=0\) and \(b\neq0\), or \(a\cdot b>0\), then
\eqref{eq:linear-drift} admits non-quadratic entire smooth strictly convex
solutions.
\end{enumerate}
\end{thm}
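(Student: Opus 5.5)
Part (i) is the Jörgens--Calabi--Pogorelov theorem, in the form proved by Cheng--Yau \cite{ChengYau86} and Caffarelli--Li \cite{CaffarelliLi03}. For \(n=1\) it is immediate, since \(u''\) is constant. So the work lies in (ii) and (iii), and I would treat them in that order.

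\textbf{Part (ii).} First, the case \(a\neq0\), \(b=0\) is the Li--Xu nonexistence theorem \cite{LiXu09,XuZhu16}. For general \(b\), I would try to reduce to a scalar maximum-principle argument along the drift direction. Rotate coordinates so that \(a=|a|e_1\), and differentiate the equation \(\log\det D^2u=-a\cdot Du+b\cdot x-c_0\) in the direction \(e_1\). Writing \(w=u_1\) and \(L=u^{ij}\partial_{ij}+a\cdot D\), this gives
\[
L w = u^{ij}w_{ij}+|a|\,w_1 = b_1 .
\]
Also \(L x_1=|a|\). Hence \(h=w-(b_1/|a|)x_1\) satisfies \(Lh=0\).

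The natural candidate is then \(\phi=u_{11}>0\), or some convex function of \(w\). Differentiating twice, and using concavity of \(\log\det\), gives
\[
u^{ij}\partial_{ij}u_{11}+a\cdot Du_{11}\ge u^{ik}u^{jl}u_{k11}u_{l11}\ge0 .
\]
So \(u_{11}\) is a positive \(L\)-subsolution. Next I would look for a barrier showing that \(u_{11}\) must vanish somewhere, which is a contradiction.

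The sign condition enters here. Consider the one-dimensional profile along \(e_1\). Along lines \(x_1\mapsto u(x_1,x')\), the density \(e^{-|a|u_1+b_1x_1}\) with \(b_1\le0\) forces \(\det D^2u\to0\) or \(\infty\) in a way incompatible with convexity. Indeed, \(u_1\) increases in \(x_1\), so the exponent \(-|a|u_1+b_1x_1\) is decreasing. I would combine this with the comparison \(\det D^2u\le \prod u_{ii}\) (Hadamard), and with the fact that \(u_1\) grows at least linearly because of strict convexity on a compact set. The goal is to integrate \(\det D^2u\) over a slab and compare it with the volume of the gradient image. For \(a\cdot b<0\) the density decays exponentially in \(+x_1\) faster than any convex growth allows. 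The null case \(b_1=0\) is the hard one: there the decay comes only from \(u_1\to\infty\).

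For the null case I would use the subsolution \(u_{11}\) together with \(Lh=0\), and try an auxiliary function such as \(\log u_{11}+\varepsilon u_1\). I would apply the maximum principle on sublevel sets of \(u\) (which are bounded, since \(u\) is proper). This is where I expect the main obstacle: producing a global contradiction without the affine-completeness assumptions used in earlier work. The paper's abstract promises exactly such a scalar maximum-principle argument, and this step is the heart of it.

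\textbf{Part (iii).} Here I would give explicit examples.

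If \(a=0\) and \(b\neq0\), rotate so that \(b=|b|e_1\) and take
\[
u=e^{|b|x_1/n}\cdot\frac{n^2}{|b|^2}+\tfrac12|x'|^2 ,
\]
suitably scaled. A cleaner choice is a separable \(u=f(x_1)+\tfrac12|x'|^2\) with
\[
f''=e^{|b|x_1-c_0}\quad\Longrightarrow\quad f=\frac{e^{|b|x_1-c_0}}{|b|^2}.
\]
This is non-quadratic.

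If \(a\cdot b>0\), try \(u=f(x\cdot e)+Q(x)\), with \(Q\) quadratic on \(e^\perp\) plus linear terms chosen to absorb the cross terms. The aim is to reduce to a one-dimensional ODE of the form
\[
f''=C\,e^{-\alpha f'+\beta t},\qquad \alpha\beta>0 .
\]
The substitution \(g=f'\) gives \(g'=Ce^{-\alpha g+\beta t}\), so
\[
e^{\alpha g}=\frac{C\alpha}{\beta}\,e^{\beta t}+K .
\]
With \(K>0\), \(g\) is globally defined and strictly increasing, and \(f\) is non-quadratic. This is exactly the Legendre-dual picture and is consistent with the Xu--Zhu one-dimensional classification \cite{XuZhu15}. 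For general \(a,b\) with \(a\cdot b>0\), I would choose \(e=a/|a|\) and add a linear term \(p\cdot x\), plus a quadratic \(\tfrac12 x^TMx\) with \(Ma\) matched to the component of \(b\) orthogonal to \(a\), so that the remaining exponent depends only on \(t=x\cdot e\).
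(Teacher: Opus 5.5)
Part (i) is fine. Part (ii), which is the substance of the theorem, is not proved. Apart from citing Li--Xu for \(b=0\), you offer heuristics, and the one for \(a\cdot b<0\) points the wrong way. Exponential decay of \(\det D^2u\) in the \(+x_1\) direction is perfectly compatible with convexity. Also, \(u_1\) need not grow along lines: strict convexity only makes it increasing, as \(\sqrt{1+|x|^2}\) shows. Finally, the gradient image of a slab can have infinite measure, so the area formula gives nothing to compare against. In one dimension the obstruction sits on the \(-x_1\) side, where the density grows and \(e^{au'}\) is forced to become negative. In higher dimensions, Hadamard's inequality does not force \(u_{11}\) to carry that growth, so this does not transfer. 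You could simply cite Wu--Xu here, as the paper does. Their argument normalizes \(u(0)=3\), \(Du(0)=0\), \(u>0\), and maximizes \(Q=\exp\{-3C/(C-u)-a\cdot Du-c_0\}\) over the bounded sublevel set \(\{u<C\}\). The first- and second-order conditions at the interior maximum \(x_C\) give \((C-u)^2\le 3Cn/(-a\cdot b)\) and \(-a\cdot Du<3n\) there. Hence \(Q(x_C)\to0\), while \(Q(0)\to e^{-3-c_0}\).

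The null case \(a\neq0\), \(a\cdot b=0\), \(n\ge2\) is the genuinely new one (Song--Wu--Xu treat \(b\neq0\) only for \(n=2\)), and you explicitly leave it open. You correctly note that, for \(\mathcal L=u^{ij}\partial_{ij}+a\cdot D\) (your \(L\)), \(a\cdot Du\) is \(\mathcal L\)-harmonic. You also note that \(u_{11}\), i.e.\ \(P/|a|^2\) with \(P=a^\top D^2u\,a\), is an \(\mathcal L\)-subsolution; the quadratic term there should read \(u^{ik}u^{jl}u_{ij1}u_{kl1}\). But three ingredients are missing.

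\emph{First}, the global bound \(\osc_{\R^n}(a\cdot Du)\le 3n+3\). This comes from the same \(Q\) once the \(a\cdot b\) term drops out.

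\emph{Second}, the trace identity \(g^{ij}T_{ij}=-P\) for \(T_{ij}=a_ku_{ijk}\), which is exactly where \(a\cdot b=0\) enters. It must be combined with \(|\nabla P|_g^2=P\sum_iT_{1i}^2\) in a \(g\)-orthonormal frame with \(a\parallel e_1\). Cauchy--Schwarz on the lower \((n-1)\times(n-1)\) block of \(T\) then yields the coercive inequality \(\mathcal L(-P^{-\theta})\ge\theta P^{-\theta-1}\bigl((P+\tau)^2/(n-1)-\theta\tau^2\bigr)\), with \(\tau=T_{11}\) and \(\theta=1/(4(n-1))\).

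\emph{Third}, the penalized function \(-P^{-\theta}-\varepsilon u\). It is bounded above and tends to \(-\infty\), hence has a global maximum. There the first-order condition and the bound on \(a\cdot Du\) control both \(\tau\) and \(\mathcal Lu=n+a\cdot Du\). This forces \(P(x_\varepsilon)\to0\), contradicting \(P(x_\varepsilon)\ge P(0)\).

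Your candidate \(\log u_{11}+\varepsilon u_1\) has none of these features. \(\log u_{11}\) is not bounded above, and \(\varepsilon u_1\) gives no decay at infinity (by the first point it is even bounded). Without the trace identity, \(\mathcal L u_{11}\ge0\) carries no coercivity, and a positive subsolution has no reason to vanish anywhere.

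Part (iii) is correct in outline but differs from the paper, which takes \(A>0\) with \(Aa=b\) and \(u=\tfrac12x^\top Ax+H(\ell\cdot x)\). Your reduction to \(f''=Ce^{-\alpha f'+\beta t}\) needs \(\det M=0\), so that \(\det(M+f''e\otimes e)=f''\det M'\), where \(M'\) is the compression of \(M\) to \(e^\perp\). Matching the \(e^\perp\)-part of \(Ma\) to \(b'\), the component of \(b\) orthogonal to \(a\), then forces \(M_{ee}=m^\top M'^{-1}m\) with \(m=b'/|a|\). So \(\beta=(a\cdot b-b'^\top M'^{-1}b')/|a|\), not \(a\cdot b/|a|\). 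Your explicit solution needs \(\beta>0\), which holds for \(M'=\lambda I\) with \(\lambda>|b'|^2/(a\cdot b)\). This is the same Schur-complement condition as in the paper's choice of \(A\). Then \(D^2u>0\) follows from \(M\ge0\), \(\ker M\not\subset e^\perp\), and \(f''>0\).
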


The new rigidity regime in Theorem~\ref{thm:linear-dichotomy} is the
null-coupling case
\[
        a\neq0,
        \quad
        a\cdot b=0.
\]
Within the class of arbitrary entire smooth strictly convex solutions,
Song--Wu--Xu \cite{SongWuXu26} proved the mixed-null nonexistence result in
dimension two.  Here we remove the dimension restriction and simultaneously
include the pure gradient-drift case \(b=0\).

Our proof exploits a scalar structure specific to the coupled-drift equation.
A sublevel-set maximum principle first gives a global oscillation bound for
\(w=a\cdot Du\).
Writing
\[
        P=a^\top D^2u\,a,
\]
we show that, in the null-coupling case, the full drifted linearized operator
makes \(w\) harmonic and yields an exact nonnegative square identity for
\(P\).  A maximum-principle argument applied to a negative power of \(P\),
together with a properness penalization, then gives the contradiction.  This
argument applies in every dimension \(n\ge2\), while the case \(n=1\) follows
from an elementary ODE calculation.  Neither part requires completeness of
the Hessian metric or uniform ellipticity.

We next turn to periodic media.  Periodic Monge--Amp\`ere equations also
arise naturally on compact special affine and Hessian manifolds.  The
foundational solvability theory in this setting goes back to Cheng and Yau
\cite{ChengYau82}.  Li \cite{Li90} developed existence and uniqueness results
for classes of fully nonlinear Monge--Amp\`ere-type equations on compact
manifolds, including further results on flat tori, and Caffarelli and
Viaclovsky \cite{CaffarelliViaclovsky01} established regularity results on
Hessian manifolds.

For the classical undrifted equation, periodicity of the Monge--Amp\`ere
density leads to a quadratic-periodic decomposition.  Caffarelli and Li
\cite{CaffarelliLi04} proved that, for a smooth positive periodic density,
every entire smooth strictly convex solution has the form
\[
        u(x)
        =
        \frac12 x^\top A x+\ell\cdot x+\psi(x),
\]
where \(A=A^\top>0\), \(\ell\in\mathbb R^n\), and \(\psi\) is periodic.  Li and
Lu \cite{LiLu22} extended this Liouville theorem to periodic densities
satisfying \(\log f\in L^\infty\).  Related existence and Liouville-type
results for quadratic-growth solutions of more general fully nonlinear elliptic
equations with periodic data, including applications to the Monge--Amp\`ere
equation, were obtained by Li and Liang \cite{LiLiang26}.  Jin--Li--Tran--Tu
\cite{JinLiTranTu25} treated periodic Monge--Amp\`ere measures which are
allowed to be degenerate or singular.  For
periodic homogenization, see
\cite{BensoussanLionsPapanicolaou78,Evans89,Evans92}.

For a prescribed matrix \(A=A^\top>0\), the natural cell problem associated
with \eqref{eq:intro-main} is
\begin{equation}\label{eq:intro-cell}
        \det(A+D^2\psi)
        =
        \exp\{-a\cdot D\psi+V-c\},
        \quad
        A+D^2\psi>0
        \quad\text{on }\mathbb T^n,
\end{equation}
where the constant \(c\) is also unknown.  The second result of this paper is
the following.

\begin{thm}
\label{thm:cell}
For every \(A=A^\top>0\), \(a\in\mathbb R^n\), and
\(V\in C^\infty(\mathbb T^n)\), there exists a unique pair
\((\psi_A,c_A)\) such that
\[
        \psi_A\in C^\infty(\mathbb T^n),
        \qquad
        \int_{\mathbb T^n}\psi_A=0,
        \qquad
        c_A\in\mathbb R,
\]
and
\[
        \det(A+D^2\psi_A)
        =
        \exp\{-a\cdot D\psi_A+V-c_A\},
        \quad
        A+D^2\psi_A>0
        \quad\text{on }\mathbb T^n.
\]
\end{thm}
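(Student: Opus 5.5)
Uniqueness comes from the comparison principle, and existence from the continuity method in the drift and potential.

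\emph{Uniqueness.} Suppose \((\psi_1,c_1)\) and \((\psi_2,c_2)\) both solve the cell problem. Take logarithms and subtract. Since \(M\mapsto\log\det M\) is concave on positive matrices, the difference \(\phi=\psi_1-\psi_2\) satisfies a linear equation
\[
L\phi:=\tr(B\,D^2\phi)+a\cdot D\phi=c_2-c_1,
\qquad
B=\int_0^1\bigl(A+D^2(t\psi_1+(1-t)\psi_2)\bigr)^{-1}dt>0 .
\]
This operator is uniformly elliptic with smooth coefficients on the compact torus.

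At a maximum point of \(\phi\) we have \(L\phi\le0\), and at a minimum point \(L\phi\ge0\). Hence \(c_2-c_1\le0\) and \(c_2-c_1\ge0\), so \(c_1=c_2\). The strong maximum principle then forces \(\phi\) to be constant, and the normalization \(\int\psi=0\) gives \(\psi_1=\psi_2\).

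\emph{Existence.} I would run the continuity method on the family
\[
\det(A+D^2\psi)=\exp\{-ta\cdot D\psi+tV-c\},\qquad t\in[0,1].
\]
At \(t=0\), the pair \(\psi=0\), \(c=-\log\det A\) is a solution.

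\emph{Openness.} The linearization at a solution is
\[
(\dot\psi,\dot c)\mapsto \tr\bigl((A+D^2\psi)^{-1}D^2\dot\psi\bigr)+ta\cdot D\dot\psi+\dot c,
\]
acting on mean-zero \(C^{2,\alpha}\) functions times \(\mathbb R\), with values in \(C^{\alpha}\). Call the first two terms \(\mathcal L\). The kernel of \(\mathcal L\) consists of constants, by the strong maximum principle. By the Fredholm alternative, the range of \(\mathcal L\) is the annihilator of a one-dimensional kernel of the adjoint \(\mathcal L^*\). The Krein--Rutman theorem shows this kernel is spanned by a positive density \(m\). Since \(\int m\cdot 1=\int m>0\), the constant \(\dot c\) supplies exactly the missing direction. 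So the map is an isomorphism and the implicit function theorem applies.

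\emph{Closedness: a priori estimates, the main step.}

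1. \emph{Bound on \(c\).} At a maximum point of \(\psi\) we have \(D\psi=0\) and \(D^2\psi\le0\). Hence \(\exp\{tV-c\}\le\det A\), so \(c\ge \min V-\log\det A\). The minimum point gives the reverse bound, so \(|c|\le C\).

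2. \(C^0\) and \(C^1\) bounds. Because \(A+D^2\psi>0\), the function \(\tfrac12x^\top Ax+\psi\) is convex on \(\mathbb R^n\). A periodic function whose sum with a fixed quadratic is convex is Lipschitz, with a bound depending only on \(A\) and the period: convexity along lines together with periodicity bounds \(|D\psi|\). This gives \(\|D\psi\|_\infty\le C(A)\), and with \(\int\psi=0\) also \(\osc\psi\le C\).

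3. Bounded density. The right-hand side \(\exp\{-ta\cdot D\psi+tV-c\}\) is now bounded above and below. The equation is therefore a Monge--Amp\`ere equation whose density is pinched between two positive constants, but the density depends on \(D\psi\).

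4. \(C^2\) bound. I would apply a Pogorelov/Calabi-type maximum principle on the torus to \(\log\lambda_{\max}(A+D^2\psi)\), or to \(\log\Delta(\tfrac12x^\top Ax+\psi)+\kappa|D\psi|^2\). Differentiating the equation twice, the gradient term produces first-order terms in \(D^3\psi\). These are absorbed using concavity of \(\log\det\) and the \(|D\psi|^2\) term, with the \(C^1\) bound controlling the remaining terms. No boundary is present, so the maximum is interior.

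This is the step I expect to be the main obstacle: handling the third-derivative terms coming from the drift \(a\cdot D\psi\). An alternative route is Caffarelli's interior \(C^{1,\alpha}\) theory: a bounded pinched density gives \(C^{1,\alpha}\), which makes the density \(C^{\alpha}\), and then \(C^{2,\alpha}\) follows.

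5. Higher regularity. Once \(D^2\psi\) is bounded, the equation is uniformly elliptic and concave. Evans--Krylov gives \(C^{2,\alpha}\), and Schauder bootstrapping gives all higher derivatives.

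With these estimates the set of \(t\) for which the problem is solvable is both open and closed in \([0,1]\). Hence \(t=1\) is attained, which gives the pair \((\psi_A,c_A)\).
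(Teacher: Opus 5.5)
Your proposal is correct and follows the paper's proof essentially step for step: the same continuity path, the same max/min bounds on \(c\), the same semiconvexity \(C^1\) bound, a maximum-principle \(C^2\) estimate, then Evans--Krylov and Schauder, with only minor differences such as using Krein--Rutman in the openness step where the paper uses index zero together with \(1\notin\operatorname{Ran}L\) from the maximum principle. The step you flag as the main obstacle is easier than you fear: at a maximum of your test function \(G\) one has \(DG=0\), so you may apply the full operator \(M^{ij}\partial_{ij}+t\,a\cdot D\), under which the linear third-order drift term cancels exactly (the paper's key observation), although \(\log\lambda_{\max}\) alone still needs a companion term such as the paper's \(-B\psi\) or your \(\kappa|D\psi|^2\) to supply a coercive term.
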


We prove the theorem by the continuity method.  The bound
\(D^2\psi\ge-A\) and periodicity give uniform \(C^0\) and \(C^1\)
estimates.  For the \(C^2\) estimate, the drift term in the full linearized
operator cancels the third-order term arising from the largest eigenvalue.
The higher-order estimates then follow from Evans--Krylov and Schauder
theory.

The normalizing constant satisfies
\[
        \min_{\mathbb T^n}V-\log\det A
        \le
        c_A
        \le
        \max_{\mathbb T^n}V-\log\det A.
\]
For fixed \(A\) and \(a\), it is \(1\)-Lipschitz with respect to \(V\) in
\(L^\infty\), and for fixed \(a\) and \(V\), it is decreasing with respect
to \(A\) in the Loewner order.

For comparison with weighted Monge--Amp\`ere equations on compact Hessian
manifolds, set
\[
        \Phi_A(x)=\frac12x^\top A x+\psi(x),
        \qquad
        p(x)=D\Phi_A(x)=Ax+D\psi(x).
\]
Then \eqref{eq:intro-cell} can be written as
\[
        e^{a\cdot p(x)}\det Dp(x)
        =
        e^{a\cdot Ax+V(x)-c},
\]
and
\[
        p(x+k)=p(x)+Ak,
        \qquad k\in\mathbb Z^n.
\]
This is related to the weighted Monge--Amp\`ere equations studied by Hultgren
and \"Onnheim \cite{HultgrenOnnheim19}.  Here, however, the source and target
weights transform by the same character under lattice translations rather
than being invariant, so their invariant-measure formulation does not apply
directly.

The cell problem also gives the compatibility condition for
quadratic-periodic solutions.  If
\[
        u(x)=\frac12x^\top A x+\ell\cdot x+\psi(x)+C,
\]
then the nonperiodic part of the exponent in \eqref{eq:intro-main} is
\((b-Aa)\cdot x\).  Hence such a solution can exist only when
\[
        b=Aa.
\]
The same condition is still necessary under a subquadratic asymptotic
assumption.  For prescribed densities converging to periodic data, related
quadratic-periodic asymptotics were obtained by Teixeira--Zhang
\cite{TeixeiraZhang16} and, more recently, by Qi--Bao \cite{QiBao25}.

\begin{thm}
\label{thm:bounded-classification}
Let \(V\in C^\infty(\mathbb T^n)\), and let
\(u\in C^\infty(\mathbb R^n)\) be a strictly convex solution of
\[
        \det D^2u
        =
        \exp\{-a\cdot Du+b\cdot x+V(x)-c_0\}
        \quad\text{in }\mathbb R^n.
\]
Suppose that, for some \(A=A^\top>0\) and \(\ell\in\mathbb R^n\),
\[
        f(x)=u(x)-\frac12x^\top A x-\ell\cdot x
\]
satisfies
\[
        \sup_{B_R}|f|=o(R^2)
        \quad\text{as }R\to\infty.
\]
Then \(b=Aa\).

If, in addition, \(f\in L^\infty(\mathbb R^n)\), then
\[
        c_0+a\cdot\ell=c_A
\]
and
\[
        u(x)
        =
        \frac12x^\top A x+\ell\cdot x+\psi_A(x)+C
\]
for some \(C\in\mathbb R\).
\end{thm}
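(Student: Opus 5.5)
Part 1 uses a blow-down at scale $R$ together with stability of Monge--Amp\`ere measures under uniform convergence; part 2 subtracts the corrector from Theorem~\ref{thm:cell} and applies a maximum principle to the difference.

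\textbf{Step 1: $b=Aa$.} Set
\[
u_R(x)=R^{-2}u(Rx)-R^{-1}\ell\cdot x .
\]
The hypothesis $\sup_{B_R}|f|=o(R^2)$ gives $u_R\to Q(x)=\tfrac12x^\top Ax$ locally uniformly. Convexity then gives $Du_R\to Ax$ locally uniformly, that is, $R^{-1}Du(Rx)\to Ax$. The equation for $u_R$ reads
\[
\det D^2u_R(x)=\exp\{-a\cdot Du(Rx)+b\cdot Rx+V(Rx)-c_0\}.
\]
Write $Du(Rx)=R\,Ax+o(R)$. The exponent becomes $R\,(b-Aa)\cdot x+o(R)+O(1)$.

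Suppose $\beta:=b-Aa\neq0$. Take a small ball $B$ centred at a point $x_0$ with $\beta\cdot x_0=1$, so $\beta\cdot x\ge\tfrac12$ on $B$. Then the Monge--Amp\`ere measure $|\partial u_R(B)|\ge e^{R/4}|B|\to\infty$. This contradicts weak convergence of Monge--Amp\`ere measures: $|\partial u_R(B)|$ stays bounded because the gradients $Du_R$ are locally bounded. So $b=Aa$.

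\textbf{Step 2: reduction when $f$ is bounded.} Now $b=Aa$, and we set $\phi=f$, so $u=\tfrac12x^\top Ax+\ell\cdot x+\phi$. Then
\[
-a\cdot Du+b\cdot x=-a\cdot D\phi-a\cdot\ell ,
\]
and the equation becomes
\[
\det(A+D^2\phi)=\exp\{-a\cdot D\phi+V-c_0-a\cdot\ell\}.
\]
Put $c=c_0+a\cdot\ell$ and compare $\phi$ with $\psi_A$, which solves the same equation with constant $c_A$. Let $w=\phi-\psi_A$, which is bounded. By the mean value theorem,
\[
\tr(M D^2w)+a\cdot Dw=c_A-c ,
\]
where $M>0$ is an averaged cofactor matrix, $\int_0^1(A+D^2\psi_t)^{-1}dt$ suitably normalized.

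I first need $D^2\phi$ bounded. Here I would cite Caffarelli--Li-type interior estimates: from bounded $f$ one gets that the sections of $u$ at every scale are comparable to ellipsoids of the shape of $A$. The right-hand side is bounded away from $0$ and $\infty$ after the shift, since $a\cdot D\phi$ is bounded because $D\phi$ is bounded by convexity and boundedness of $\phi$. Caffarelli's $C^{1,\alpha}$ and then $C^{2,\alpha}$ estimates on unit-size sections then give uniform ellipticity of $M$.

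\textbf{Step 3: Liouville for $w$.} We have a bounded $w$ with $Lw=c_A-c$ constant, $L$ uniformly elliptic with a bounded drift.

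First suppose $c_A\neq c$. A bounded solution of $Lw=$ nonzero constant on $\R^n$ is impossible. To see this, compare $w$ with the barrier $\varepsilon|x|^2$ on large balls: with $\kappa=c_A-c$, one has $L(w-\tfrac{\kappa}{2\Lambda'}|x|^2)\ge0$ up to drift terms. The drift term $a\cdot x$ grows and must be handled. Instead I use an averaging argument. For large $R$ and any $x_0$, Krylov--Safonov gives $|Dw|\lesssim R^{-1}\osc w+R\,|\kappa|$ on balls of radius $R$; the drift is harmless because it is bounded relative to $R^{-1}$. Then Harnack/ABP on $B_R$ gives $|\kappa|\,R^2\lesssim\osc w$, bounded. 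Letting $R\to\infty$ forces $\kappa=0$.

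With $\kappa=0$, $Lw=0$, and bounded $L$-harmonic functions with bounded measurable coefficients and bounded drift are constant by the Krylov--Safonov Harnack inequality at scale $R$: $\osc_{B_R}w\le\theta\,\osc_{B_{2R}}w$ uniformly for large $R$. Hence $\phi=\psi_A+C$.

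I expect the main obstacle to be Step 2's global $C^2$ bound. If $\phi$ is periodic-like this is routine, but for general bounded $\phi$ it relies on Caffarelli's section theory with a right-hand side that depends on $D\phi$. I would handle that dependence by noting that $D\phi$ is already bounded a priori, so the density is a bounded positive function of $x$.
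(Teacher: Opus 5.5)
Your Step 1 is correct and uses the paper's mechanism: an exponentially large Monge--Amp\`ere mass is compared with a polynomially bounded gradient image. You obtain $Du_R\to Ax$ from the convex blow-down, where the paper uses its semiconvexity lemma. Step 2 is, in substance, the paper's translation-compactness lemma.

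\textbf{The gap is in Step 3.} Rescale $B_R(x_0)$ to $B_1$ by setting $w_R(y)=w(x_0+Ry)$. This gives
\[
M^{ij}(x_0+Ry)\,\partial_{ij}w_R+R\,a\cdot Dw_R=R^2\kappa ,
\]
so the drift becomes $Ra$. It is not harmless at large scales; it dominates. The Krylov--Safonov Harnack and oscillation-decay constants depend on $R|a|$ and degenerate. For example, $v=e^{-a\cdot x}$ is a positive solution of $\Delta v+a\cdot Dv=0$ with $\sup_{B_R}v/\inf_{B_R}v=e^{2|a|R}$. Hence there is no $\theta<1$ independent of $R$ with $\osc_{B_R}w\le\theta\,\osc_{B_{2R}}w$.

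The principle that bounded $L$-harmonic functions are constant when the drift is bounded is false in general. A bounded drift equal to $x/|x|$ outside a ball has nonconstant bounded harmonic functions. Since $M$ depends on the unknown $w$, you also cannot fall back on periodic coefficients.

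The case $\kappa\neq0$ can be repaired with the barrier you abandoned. At a maximum $x_\varepsilon$ of $w-\varepsilon|x-x_0|^2$ one has $\varepsilon|x_\varepsilon-x_0|^2\le\osc w$. Therefore $\kappa=Lw(x_\varepsilon)\le 2\varepsilon\tr M+2|a|\sqrt{\varepsilon\,\osc w}\to0$, and symmetrically $\kappa\ge0$. The case $\kappa=0$, however, genuinely requires a Liouville theorem for a non-translation-invariant operator with constant drift, and your argument does not supply one.

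\textbf{The missing idea} is to use the $\mathbb Z^n$-invariance of the nonlinear equation instead of a linear Liouville theorem. The paper sets $h=f-\psi_A$, which solves $\log\det(M_A+D^2h)-\log\det M_A+a\cdot Dh=\delta$ with $M_A$ periodic. Your Step 2 estimates give precompactness of the integer translates $h(\cdot+m)$ in $C^\infty_{\mathrm{loc}}$. The argument then runs as follows:
\begin{enumerate}
\item For fixed $k\in\mathbb Z^n$, let $C_k=\sup\bigl(h(\cdot+k)-h\bigr)$ and take a maximizing sequence.
\item Pass to a limit $H$ along the corresponding integer translates. Then $H(\cdot+k)-C_k\le H$, with equality at some point.
\item The strong comparison principle gives $H(\cdot+k)\equiv H+C_k$. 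Boundedness of $H$ forces $C_k=0$.
\item Applying this to $\pm k$ shows that $h$ is periodic.
\item On the torus, evaluating the equation at a maximum and at a minimum of $h$ gives $\delta=0$.
\item Uniqueness for the cell problem then shows that $h$ is constant.
\end{enumerate}
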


For the first conclusion we compare the exponential growth caused by
\(b-Aa\neq0\) with the polynomial growth of \(Du(B_R)\).  In the bounded
case, compactness of integer translates and the strong comparison principle
give periodicity; the cell-problem uniqueness then finishes the proof.

The paper is organized as follows.  Section~\ref{sec:constant-coupled-drift}
proves the whole-space alternatives.  Section~\ref{sec:periodic-cell} treats
the periodic cell problem, and
Section~\ref{sec:periodic-correctors-rigidity} proves the compatibility and
bounded-corrector rigidity results.

\vspace{1em}
\noindent\textbf{Acknowledgments.}
The author is deeply grateful to his postdoctoral mentor, Prof.~Antonio De Rosa, for generous support and encouragement.

This work was funded by the European Union through the European Research
Council (ERC), under the Starting Grant ``ANGEVA'' (grant agreement
No.~101076411).  Views and opinions expressed are, however, those of the
author only and do not necessarily reflect those of the European Union or the
European Research Council.  Neither the European Union nor the granting
authority can be held responsible for them.

\section{The constant-medium equation}
\label{sec:constant-coupled-drift}

We prove Theorem~\ref{thm:linear-dichotomy} for the constant-medium equation
\begin{equation}\label{eq:LD-again}
        \det D^2u
        =
        \exp\{-a\cdot Du+b\cdot x-c_0\},
        \quad
        D^2u>0,
        \quad
        x\in\mathbb R^n.
\end{equation}
Throughout this section, strict convexity means \(D^2u>0\).

If \(a=b=0\), then
\eqref{eq:LD-again} reduces to the constant-density Monge--Amp\`ere equation
\[
        \det D^2u=e^{-c_0}
        \quad\text{in }\mathbb R^n.
\]
The J\"orgens--Calabi--Pogorelov theorem therefore implies that every entire
smooth strictly convex solution is a quadratic polynomial; see
\cite{Jorgens,Calabi,Pogorelov72}.  Its constant Hessian matrix necessarily satisfies
\(\det D^2u=e^{-c_0}\).

Before treating the negative-coupling case, we record an elementary consequence
of strict convexity.
\begin{lem}
\label{lem:properness-entire-convex}
Let \(u\in C^\infty(\mathbb R^n)\) be strictly convex and assume that
\(Du(0)=0\).  Then
\[
        u(x)\rightarrow+\infty
        \quad\text{as }|x|\rightarrow\infty.
\]
In particular, every sublevel set of \(u\) is bounded.
\end{lem}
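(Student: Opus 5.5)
We will use the positive lower bound on the Hessian over a compact set together with convexity along rays. Since \(u\in C^\infty\) and \(D^2u>0\) everywhere, continuity and compactness of the closed unit ball \(\overline{B_1}\) give a constant \(\lambda>0\) with \(D^2u\ge\lambda I\) on \(\overline{B_1}\). We do not need any bound outside \(B_1\); convexity will propagate the growth.

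First, for \(|x|\le1\), Taylor expansion at the origin with integral remainder and \(Du(0)=0\) give
\[
  u(x)\ge u(0)+\tfrac{\lambda}{2}|x|^2 .
\]
In particular, on the unit sphere we have \(u\ge u(0)+\lambda/2\).

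Next, take \(|x|\ge1\) and set \(e=x/|x|\). The function \(g(t)=u(te)\) is convex on \([0,\infty)\), and \(g'(0)=Du(0)\cdot e=0\). The second derivative satisfies \(g''(t)=e^\top D^2u(te)\,e\ge\lambda\) for \(t\in[0,1]\). Hence
\[
  g'(1)=\int_0^1 g''\ge\lambda .
\]
By convexity \(g'\) is nondecreasing, so \(g'(t)\ge\lambda\) for \(t\ge1\). Therefore
\[
  u(x)=g(|x|)\ge g(1)+\lambda(|x|-1)\ge u(0)+\tfrac{\lambda}{2}+\lambda(|x|-1),
\]
and this tends to \(+\infty\) uniformly in the direction as \(|x|\to\infty\). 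Alternatively, the supporting hyperplane inequality at \(te\) gives the same bound.

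Finally, for any \(M\), the sublevel set \(\{u\le M\}\) is contained in the ball of radius \(\max\{1,\,1+(M-u(0))/\lambda\}\), so it is bounded.

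The only delicate point is that the growth must be uniform in the direction \(e\). This is why we use the single constant \(\lambda\) from compactness of \(\overline{B_1}\), rather than arguing ray by ray. With that uniform constant the argument is routine.
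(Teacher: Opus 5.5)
Your proof is correct and follows essentially the same route as the paper: you restrict to rays $g(t)=u(te)$ and use $g'(0)=0$ together with monotonicity of $g'$ to get linear growth beyond $|x|=1$, with a slope made uniform in $e$ by compactness. The only difference is the source of the uniform slope: you get it from a lower bound $D^2u\ge\lambda I$ on $\overline{B_1}$, whereas the paper takes $\sigma=\min_{|e|=1}Du(e)\cdot e>0$ directly, which needs only strict monotonicity of $g_e'$ and not positive definiteness of the Hessian (harmless here, since the paper's convention is $D^2u>0$).
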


\begin{proof}
For \(e\in \mathbb S^{n-1}\), set
\[
        g_e(t)=u(te).
\]
Then
\[
        g_e''(t)=e^\top D^2u(te)e>0,
        \quad
        g_e'(0)=0.
\]
Hence \(g_e'(1)>0\) for every \(e\in \mathbb S^{n-1}\).  Since
\[
        e\longmapsto g_e'(1)=Du(e)\cdot e
\]
is continuous on the compact unit sphere,
\[
        \sigma:=\min_{|e|=1}g_e'(1)>0.
\]
For \(r\ge1\), convexity gives
\[
        u(re)=g_e(r)
        \ge
        g_e(1)+(r-1)g_e'(1)
        \ge
        \min_{|e|=1}u(e)+\sigma(r-1).
\]
The right-hand side tends to \(+\infty\) uniformly in \(e\).
\end{proof}

\begin{prop}
\label{prop:negative-coupling}
Assume that \(a\neq0\) and \(a\cdot b<0\).  Then
\eqref{eq:LD-again} admits no entire smooth strictly convex solution.
\end{prop}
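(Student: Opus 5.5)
The plan is a maximum-principle argument for the scalar \(w=a\cdot Du\), using the drifted linearized operator of \eqref{eq:LD-again}. Suppose \(u\) is an entire smooth strictly convex solution. First normalize. Replacing \(u\) by \(u-\ell\cdot x\) with \(\ell=Du(0)\) changes \(a\cdot Du\) by the constant \(a\cdot\ell\), which is absorbed into \(c_0\). So we may assume \(Du(0)=0\), and Lemma~\ref{lem:properness-entire-convex} then makes every sublevel set \(\Omega_t=\{u<t\}\) bounded.

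Next, take logarithms in \eqref{eq:LD-again}, giving \(\log\det D^2u+a\cdot Du-b\cdot x=-c_0\), and differentiate in the direction \(a\). With \(L v:=u^{ij}v_{ij}+a\cdot Dv\), where \((u^{ij})=(D^2u)^{-1}\), this gives
\[
        L w=a\cdot b<0,\qquad w=a\cdot Du .
\]
Differentiating in a coordinate direction instead gives \(L(u)=n+w\). So \(w\) is a strict supersolution of a linear elliptic operator with no zeroth-order term, and by the minimum principle \(w\) has no interior local minimum on any domain.

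The contradiction comes from penalizing with the proper function \(u\). For \(\varepsilon>0\) set \(\varphi_\varepsilon=w+\varepsilon g(u)\), where \(g\) is a slowly increasing concave function (first try \(g(s)=s\), then \(g(s)=\log(1+s-\min u)\)). If \(\varphi_\varepsilon\) attains an interior minimum at \(x_0\), then \(D\varphi_\varepsilon=0\) and \(L\varphi_\varepsilon\ge 0\) there. For \(g(s)=s\) this reads \(a\cdot b+\varepsilon(n+w(x_0))\ge0\), forcing \(w(x_0)\ge |a\cdot b|/\varepsilon-n\), which is very large. Since \(x_0\) is a minimum point, this lower bound propagates: \(w\ge w(x_0)+\varepsilon(u(x_0)-u)\) everywhere. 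I would then evaluate at the minimum point of \(u\), where \(Du=0\) and hence \(w=0\), to obtain \(0\ge |a\cdot b|/\varepsilon-n-\varepsilon\,\osc\) over a fixed compact set. Letting \(\varepsilon\to0\) gives a contradiction, provided an interior minimum of \(\varphi_\varepsilon\) exists.

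The main obstacle is exactly this existence, i.e.\ showing that \(\varphi_\varepsilon\) is proper, which requires some lower control of \(w\) at infinity relative to \(\varepsilon g(u)\). Two pieces of structure should help. Along lines in direction \(a\) we have \(\partial_s w(x+sa)=a^\top D^2u\,a>0\), so \(w\) is monotone along \(a\)-lines. The Legendre dual \(f=u^*\) satisfies the same type of equation with the roles of \(a\) and \(b\) exchanged, so \(b\cdot Df=b\cdot x\) satisfies an analogous identity. I would first try to combine these to prove a global lower bound \(w\ge-C\); the sublevel-set argument on \(\Omega_t\), with \(t\to\infty\), then closes. If that fails, I would work on \(\Omega_t\) directly with a boundary minimum, using the sign of \(a\cdot b\) together with the monotonicity along \(a\) to rule out the boundary.
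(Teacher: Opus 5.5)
Your identities $Lw=a\cdot b$ and $Lu=n+w$ are correct. If $w+\varepsilon u$ attained a global minimum at some $x_0$, your argument would close; the correction term $\varepsilon(u(x_0)-\min u)$ even has a favorable sign, so no oscillation bound is needed. But there is a genuine gap exactly where you place the main obstacle: nothing in the proposal guarantees that minimum exists.

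Properness of $u$ alone does not suffice, because $w=a\cdot Du$ may tend to $-\infty$ faster than $\varepsilon u$ grows. For the convex function $u=e^{-x_1}+x_1+\frac12|x'|^2$ with $a=e_1$, one has $w+\varepsilon u\to-\infty$ as $x_1\to-\infty$ for $\varepsilon<1$. So the equation itself must be used here, and a slower concave $g$ only makes matters worse.

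Neither of your suggested repairs supplies this.
\begin{itemize}
\item Restricting to $\Omega_t$ and excluding a boundary minimum runs against the minimum principle itself. Wherever $w\le0$ and $\varepsilon<|a\cdot b|/n$ one has $L(w+\varepsilon u)<0$, and strict supersolutions attain their minimum on the boundary. There $w=|Du|\,a\cdot\nu$ (with $\nu$ the outer normal) can be very negative on the part where $a\cdot\nu<0$.
\item Monotonicity of $w$ along $a$-lines gives no lower bound, since $-a$ is exactly the direction in which $w$ decreases.
\item The Legendre transform lives only on $Du(\R^n)$, which need not be all of $\R^n$, so it yields no whole-space identity forcing $w\ge-C$.
\end{itemize}

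The missing idea is a penalization that blows up on the boundary of a sublevel set. After normalizing $u(0)=3$, $Du(0)=0$, $u>0$, the paper minimizes $\phi=w+\kappa/\eta$ on $\Omega_C=\{u<C\}$, with $\eta=C-u$ and $\kappa=3C$; this is equivalent to maximizing its $Q$. Since $\phi\to+\infty$ on $\partial\Omega_C$, the minimum point $x_C$ is interior.

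The price is the term $2\kappa\,u^{ij}u_iu_j/\eta^3$ in $u^{ij}\phi_{ij}$, which has the unfavorable sign. It is controlled by the first-order condition $Dw=-\kappa Du/\eta^2$. Contracting this with $(D^2u)^{-1}Du$ gives $w=-\kappa\,u^{ij}u_iu_j/\eta^2\le0$ at $x_C$. Substituting,
$0\le u^{ij}\phi_{ij}=a\cdot b+\kappa n/\eta^2-(-w)(\kappa/\eta^2-2/\eta)$.
Since $\kappa>2\eta$, this yields $\eta(x_C)^2\le 3Cn/|a\cdot b|$ and $-w(x_C)<3n$.

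Unlike in your scheme, this pointwise inequality is not contradictory by itself, because $\kappa n/\eta^2$ is not small. It only forces $x_C$ close to $\partial\Omega_C$, where $\kappa/\eta\ge c\sqrt C$. The contradiction comes from comparing values: $\phi(x_C)\ge-3n+c\sqrt C$, while $\phi(0)=3C/(C-3)\to3$.

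This same value comparison is also how one would obtain your desired lower bound on $w$ (compare Lemma~\ref{lem:borderline}). When $a\cdot b<0$, it gives the contradiction directly.
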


\begin{proof}
This is the nonexistence theorem of Wu--Xu
\cite[Theorem~1.3]{WuXu21}.  We include the argument since the same auxiliary
function will be used in the borderline case.

Suppose that a solution exists.  After translating the base point, subtracting
a supporting affine function, and changing the constant \(c_0\), we may assume
\[
        u(0)=3,
        \quad
        Du(0)=0,
        \quad
        u>0.
\]
For \(C>3\), set
\[
        \Omega_C=\{u<C\},
        \quad
        \eta=C-u,
        \quad
        \kappa=3C.
\]
By Lemma~\ref{lem:properness-entire-convex}, the domain \(\Omega_C\) is
bounded.

Define
\[
        Q
        =
        \exp\left(-\frac{\kappa}{\eta}\right)
        e^{-b\cdot x}\det D^2u
        =
        \exp\left(
        -\frac{\kappa}{\eta}
        -a\cdot Du
        -c_0
        \right)
        \quad\text{in }\Omega_C.
\]
The function \(Q\) extends continuously by zero to
\(\partial\Omega_C\).  It therefore attains its maximum at an interior point
\(x_C\in\Omega_C\).

At \(x_C\),
\begin{equation}\label{eq:sublevel-first-order}
        0=(\log Q)_i
        =
        -\frac{\kappa u_i}{\eta^2}
        -a_\ell u_{\ell i}.
\end{equation}
Contracting \eqref{eq:sublevel-first-order} with \(a_i\) and with
\(u^{ij}u_j\), respectively, gives
\begin{equation}\label{eq:sublevel-first-identities}
        a^\top D^2u\,a
        =
        -\frac{\kappa}{\eta^2}a\cdot Du,
        \quad
        -a\cdot Du
        =
        \frac{\kappa}{\eta^2}u^{ij}u_i u_j
        \ge0.
\end{equation}
At the same point,
\[
\begin{aligned}
        0
        &\ge
        u^{ij}(\log Q)_{ij} \\
        &=
        a^\top D^2u\,a
        -a\cdot b
        -\frac{\kappa n}{\eta^2}
        -\frac{2\kappa}{\eta^3}u^{ij}u_i u_j.
\end{aligned}
\]
Using \eqref{eq:sublevel-first-identities}, we obtain
\begin{equation}\label{eq:sublevel-key}
        0
        \ge
        \left(
        \frac{\kappa}{\eta^2}
        -\frac2\eta
        \right)(-a\cdot Du)
        -a\cdot b
        -\frac{\kappa n}{\eta^2}.
\end{equation}

Since \(0<u<C\) and \(\kappa=3C\),
\[
        \frac{\kappa}{\eta^2}-\frac2\eta
        =
        \frac{\kappa-2\eta}{\eta^2}
        =
        \frac{C+2u}{(C-u)^2}
        >0.
\]
The first term in \eqref{eq:sublevel-key} is therefore nonnegative.  Since
\(a\cdot b<0\), it follows that
\[
        \eta(x_C)^2
        \le
        \frac{\kappa n}{-a\cdot b}
        =
        \frac{3Cn}{-a\cdot b}.
\]
Consequently,
\[
        \frac{\kappa}{\eta(x_C)}
        \ge
        c_1\sqrt C
\]
for some \(c_1>0\) depending only on \(n\) and \(-a\cdot b\).

Set
\[
        X=-a\cdot Du(x_C)\ge0.
\]
Equation \eqref{eq:sublevel-key} also gives
\[
        \left(
        \frac{\kappa}{\eta(x_C)^2}
        -\frac2{\eta(x_C)}
        \right)X
        \le
        a\cdot b+\frac{\kappa n}{\eta(x_C)^2}
        <
        \frac{\kappa n}{\eta(x_C)^2}.
\]
Thus
\[
        X
        <
        \frac{\kappa n}{\kappa-2\eta(x_C)}
        <3n.
\]
It follows that
\[
\begin{aligned}
        \log Q(x_C)
        &=
        -\frac{\kappa}{\eta(x_C)}
        -a\cdot Du(x_C)
        -c_0 \\
        &\le
        -c_1\sqrt C+3n-c_0,
\end{aligned}
\]
and therefore
\[
        Q(x_C)\rightarrow0
        \quad\text{as }C\rightarrow\infty.
\]
On the other hand,
\[
        Q(0)
        =
        \exp\left(-\frac{3C}{C-3}-c_0\right)
        \rightarrow e^{-3-c_0}>0.
\]
For sufficiently large \(C\), this contradicts
\(Q(0)\le Q(x_C)\).
\end{proof}

Assume now that \(a\cdot b=0\).  We first obtain a global oscillation
estimate for \(a\cdot Du\).

\begin{lem}
\label{lem:borderline}
Let \(u\) solve \eqref{eq:LD-again} and suppose that \(a\cdot b=0\).  Then
\[
        \left|
        a\cdot Du(x)-a\cdot Du(y)
        \right|
        \le 3n+3
        \quad
        \text{for all }x,y\in\mathbb R^n.
\]
\end{lem}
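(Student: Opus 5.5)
The plan is to reuse the auxiliary function from the proof of Proposition~\ref{prop:negative-coupling}, now normalized at an arbitrary base point. When \(a\cdot b=0\), the interior maximum estimate no longer yields a contradiction. It does, however, give a one-sided bound on \(-a\cdot Du\) relative to its value at the base point. Letting \(C\to\infty\) and then swapping the roles of the two points should give the two-sided oscillation bound. The case \(a=0\) is trivial, so assume \(a\neq0\).

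\emph{Step 1: normalization at \(y\).} Fix \(y\in\mathbb R^n\) and set
\[
v(x)=u(x+y)-u(y)-Du(y)\cdot x+3 .
\]
Then \(D^2v(x)=D^2u(x+y)\) and \(Dv(x)=Du(x+y)-Du(y)\). Substituting into \eqref{eq:LD-again} shows that \(v\) solves the same equation with the same \(a\) and \(b\). Only the constant changes, to
\[
c_0'=c_0-b\cdot y+a\cdot Du(y).
\]
In particular \(a\cdot b=0\) is preserved. By convexity of \(u\) we have \(v\ge3>0\), and also \(v(0)=3\) and \(Dv(0)=0\). Lemma~\ref{lem:properness-entire-convex} then makes each sublevel set \(\Omega_C=\{v<C\}\) bounded.

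\emph{Step 2: the maximum-point bound.} For \(C>3\), take \(\eta=C-v\), \(\kappa=3C\) and
\[
Q=\exp\{-\kappa/\eta-a\cdot Dv-c_0'\}.
\]
As before, \(Q\) attains its maximum at some \(x_C\in\Omega_C\). The computation leading to \eqref{eq:sublevel-key} is purely local and holds verbatim. With \(a\cdot b=0\) it reads
\[
\frac{\kappa-2\eta}{\eta^2}\,X\le\frac{\kappa n}{\eta^2},
\qquad X=-a\cdot Dv(x_C)\ge0 .
\]
Since \(\kappa-2\eta=C+2v>0\), this gives
\[
X\le \frac{3Cn}{C+2v(x_C)}<3n .
\]

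\emph{Step 3: transfer to arbitrary points.} For every \(x\in\Omega_C\), the inequality \(\log Q(x)\le\log Q(x_C)\) gives
\[
-\frac{\kappa}{\eta(x)}-a\cdot Dv(x)\le -\frac{\kappa}{\eta(x_C)}+X< 3n,
\]
and hence
\[
-a\cdot Dv(x)<3n+\frac{3C}{C-v(x)} .
\]
Fix \(x\) and let \(C\to\infty\). Then \(x\in\Omega_C\) for all large \(C\), and the last term tends to \(3\), so \(-a\cdot Dv(x)\le 3n+3\). Since \(Dv(x)=Du(x+y)-Du(y)\) and \(x\) is arbitrary, this says
\[
a\cdot Du(y)-a\cdot Du(z)\le 3n+3 \qquad\text{for all } y,z\in\mathbb R^n .
\]
Exchanging \(y\) and \(z\) gives the absolute-value bound.

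I do not expect a serious obstacle. The only point needing care is Step~1: checking that translation and subtraction of the affine function change only the constant \(c_0\), and not \(a\), \(b\) or the sign structure, so that the identities behind \eqref{eq:sublevel-key} apply unchanged to \(v\).
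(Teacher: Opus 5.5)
Your proof is correct and follows essentially the same route as the paper's proof: the same auxiliary function $Q$ on the sublevel sets $\{v<C\}$, the bound $-a\cdot Dv(x_C)<3n$ from \eqref{eq:sublevel-key} with $a\cdot b=0$, the comparison $Q(x)\le Q(x_C)$ followed by $C\to\infty$, and an affine renormalization at an arbitrary base point with the two points then interchanged. The only difference is cosmetic: you normalize at an arbitrary point $y$ from the start, whereas the paper first proves the one-sided bound at a fixed base point and then re-applies it to $v_{x_0}$.
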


\begin{proof}
Fix a base point and perform an affine normalization so that
\[
        u(0)=3,
        \quad
        Du(0)=0,
        \quad
        u>0.
\]
For \(C>3\), use the same quantities
\[
        \Omega_C=\{u<C\},
        \quad
        \eta=C-u,
        \quad
        \kappa=3C,
\]
and the same auxiliary function \(Q\) as in the proof of
Proposition~\ref{prop:negative-coupling}.  Let \(x_C\) be its interior
maximum point.

Equations \eqref{eq:sublevel-first-identities} and
\eqref{eq:sublevel-key} remain valid.  Since \(a\cdot b=0\), we obtain
\[
        0
        \ge
        \left(
        \frac{\kappa}{\eta^2}
        -\frac2\eta
        \right)(-a\cdot Du)
        -\frac{\kappa n}{\eta^2}
\]
at \(x_C\).  Therefore
\[
        -a\cdot Du(x_C)
        \le
        \frac{\kappa n}{\kappa-2\eta(x_C)}
        <3n.
\]

For any fixed \(x\in\mathbb R^n\), once \(C>u(x)\), the inequality
\(Q(x)\le Q(x_C)\) gives
\[
        -a\cdot Du(x)
        \le
        -a\cdot Du(x_C)
        +\frac{\kappa}{C-u(x)}
        -\frac{\kappa}{\eta(x_C)}.
\]
Discarding the final nonpositive term and letting \(C\to\infty\), we find
\[
        -a\cdot Du(x)\le3n+3.
\]

To compare two arbitrary points, fix \(x_0\in\mathbb R^n\) and consider
\[
        v_{x_0}(y)
        =
        u(x_0+y)-u(x_0)-Du(x_0)\cdot y+3.
\]
By convexity,
\[
        v_{x_0}(0)=3,
        \quad
        Dv_{x_0}(0)=0,
        \quad
        v_{x_0}\ge3.
\]
The function \(v_{x_0}\) satisfies an equation of the form
\eqref{eq:LD-again} with the same vectors \(a,b\), the same coupling
\(a\cdot b=0\), and a possibly different constant.  Applying the preceding
one-sided estimate to \(v_{x_0}\) yields
\[
        a\cdot Du(x_0+y)-a\cdot Du(x_0)
        \ge-(3n+3).
\]
Interchanging the two points gives the reverse inequality.
\end{proof}

We use this oscillation bound in the following proposition.

\begin{prop}
\label{prop:bounded-null-rigidity}
Let \(n\ge2\), \(a\neq0\), and \(a\cdot b=0\).  Suppose that
\(u\in C^4(\mathbb R^n)\) is strictly convex, solves
\eqref{eq:LD-again}, and satisfies
\[
        \osc_{\mathbb R^n}(a\cdot Du)<\infty.
\]
Then no such solution exists.
\end{prop}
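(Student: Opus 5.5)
The plan is to work with the drifted linearized operator
\[
        L=u^{ij}\partial_{ij}+a\cdot D,
\]
and to extract two exact identities from differentiating \eqref{eq:LD-again} along \(a\).

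\textbf{Step 1: \(w\) is \(L\)-harmonic.} Set \(w=a\cdot Du\) and \(P=a^\top D^2u\,a=a\cdot Dw\). Differentiate \(\log\det D^2u=-a\cdot Du+b\cdot x-c_0\) once in the direction \(a\). Using \(a\cdot b=0\), this gives \(u^{ij}w_{ij}=-P\). Since \(a\cdot Dw=P\), we get \(Lw=0\). Moreover \(u^{ij}w_iw_j=a^\top D^2u\,a=P\), so \(u^{ij}(e^w)_{ij}=e^w(-P+P)=0\). Thus \(e^w\) is a positive solution of the undrifted linearized equation, and it is bounded above and below by the oscillation hypothesis (or by Lemma~\ref{lem:borderline}).

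\textbf{Step 2: square identity for \(P\).} Differentiate the equation twice in the direction \(a\). This yields
\[
        LP=u^{ik}u^{jl}w_{ij}w_{kl}=|D^2w|_u^2\ \ge\ \tfrac1n\bigl(u^{ij}w_{ij}\bigr)^2=\tfrac1n P^2 .
\]
We also have the Cauchy--Schwarz bound \(u^{ij}P_iP_j\le P\,|D^2w|_u^2\), because \(P_i=w_{ij}a_j\) and \(a^\top D^2u\,a=P\).

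\textbf{Step 3: maximum principle on sublevel sets.} Normalize as in Lemma~\ref{lem:borderline}: \(u(0)=3\), \(Du(0)=0\), \(u>0\). Work on the bounded set \(\Omega_C=\{u<C\}\), which is bounded by Lemma~\ref{lem:properness-entire-convex}. I would maximize a test function of the form
\[
        \Phi=(C-u)^{\beta}\,P^{-\alpha}\,g(w),
\]
or, alternatively, \(\Phi=(C-u)^\beta P\,g(w)\) aimed at an upper bound. Here \(g\) is an exponential weight \(e^{\gamma w}\), controlled by the bounded oscillation of \(w\), chosen to absorb the gradient cross terms.

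At the interior maximum, the first-order condition expresses \(DP/P\) through \(Dw\) and \(Du/(C-u)\). This removes the bad term \(\alpha(\alpha+1)P^{-\alpha-2}u^{ij}P_iP_j\). The remaining good term \(P^{-\alpha-1}|D^2w|_u^2\ge P^{1-\alpha}/n\) then dominates. The penalization terms are of the form \(u^{ij}u_{ij}=n\) and \(a\cdot Du=w\), and the latter is bounded.

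The aim is to conclude that \(P\) is bounded above (respectively below) by \(O(C^{-1})\) or \(O(1/C)\)-type quantities at points of \(\{u<C/2\}\). Letting \(C\to\infty\) would then give \(P\equiv0\) at the fixed point \(0\). This contradicts \(D^2u>0\) and \(a\neq0\). The hypothesis \(n\ge2\) enters only through constants in the dimensional inequality; the case \(n=1\) is handled separately by an ODE argument.

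\textbf{Main obstacle.} The hardest step is the choice of exponents \(\alpha,\beta,\gamma\) so that the third-order terms close. The Cauchy--Schwarz loss \(u^{ij}P_iP_j\le P|D^2w|^2_u\) is exactly borderline against \(LP\ge|D^2w|_u^2\). I would first try to use the identity \(u^{ij}w_iw_j=P\) with the weight \(e^{\gamma w}\). Mixing \(P\) with \(w\)-derivatives through the first-order condition should produce an extra strictly positive multiple of \(P^2\), beating the borderline. If that fails, I would replace \(P\) by \(Pe^{-\gamma w}\), which is again a subsolution of \(L\).

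The penalization must only cost \(O(n/(C-u))\)-type errors. This works because \(L u=n+w\) is bounded.
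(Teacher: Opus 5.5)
Your identities in Steps 1 and 2 are correct. Step 3, however, is not yet an argument, and with only the inequalities you record it cannot be completed by any choice of $\alpha,\beta,\gamma$.

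From $LP=|T|_g^2$ (with $T=D^2w$) and $|\nabla P|_g^2\le P|T|_g^2$ you get only $L\log P\ge0$, with no coercive remainder.

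The multiplicative cutoff also costs more than you say. With $\eta=C-u$, one has $L\log\eta=-(n+w)/\eta-|\nabla u|_g^2/\eta^2$, and the gradient term is not controlled by $Lu=n+w$. Take $\log\Phi=\beta\log\eta+p\log P+\gamma w$. The first-order condition, together with $\langle\nabla u,\nabla w\rangle_g=w$ and $|\nabla w|_g^2=P$, turns $-\beta|\nabla u|_g^2/\eta^2$ into $-\frac{p^2}{\beta}|\nabla P|_g^2/P^2+\frac{\gamma^2}{\beta}P-\frac{2\gamma w}{\eta}$. So the weight does produce the positive multiple $\gamma^2P/\beta$ you anticipated; it contributes nothing at second order, since $Lw=0$.

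But your crude bounds only give $p\bigl(|T|_g^2/P-|\nabla P|_g^2/P^2\bigr)-\frac{p^2}{\beta}|\nabla P|_g^2/P^2\ge-\frac{p^2}{\beta}|T|_g^2/P$. This is a loss of size $|T|_g^2/P$, which no multiple of $P$ can absorb.

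The two fallbacks do not help either. Replacing $P$ by $Pe^{-\gamma w}$ leaves $L\log$ unchanged. With $P^{-\alpha}$ as a factor of a maximized product, the term $\alpha P^{-\alpha-1}|T|_g^2$ has the wrong sign; it is favorable only for $-P^{-\alpha}$.

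The missing idea is the finer structure of $T$ that the paper exploits. In a $g$-orthonormal frame with $a=\sqrt P\,e_1$, write $\tau=T_{11}$, $v=(T_{12},\dots,T_{1n})$ and $B=(T_{\alpha\beta})_{\alpha,\beta\ge2}$. Since $P_i=T_{ij}a_j$, you have $|\nabla P|_g^2/P=\tau^2+|v|^2$, while $|T|_g^2=\tau^2+2|v|^2+|B|^2$. So the Cauchy--Schwarz loss is only $\tau^2$. The identity $\tr_gT=-P$ then forces $|B|^2\ge(P+\tau)^2/(n-1)$; this is where the paper uses $n\ge2$. The lost quantity $\tau=a\cdot DP/P$ is controlled by contracting the first-order condition with $a$, since $a\cdot Du=w$ is bounded, say $|w|\le W$.

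With this in hand, your own test function already closes with $\beta=1$ and $\gamma=0$. At a maximum of $\eta P$ on $\{u<C\}$ one finds $\tau=w/\eta$ and $|\nabla u|_g^2/\eta^2=(\tau^2+|v|^2)/P$, hence $|B|^2\le\tau^2+(n+w)P/\eta$. Combining this with the trace bound gives $(\eta P+w)^2\le(n-1)\bigl(w^2+(n+w)\eta P\bigr)$, so $\eta P\le K(n,W)$ there and hence everywhere on the sublevel set. Then $(C-u(0))P(0)\le K$ for every $C$, which forces $P(0)=0$.

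The paper instead maximizes $-P^{-\theta}-\varepsilon u$ with $\theta=1/(4(n-1))$. The additive penalization produces no $|\nabla u|_g^2$ term at all, and the contradiction comes from $P(x_\varepsilon)\ge P(0)$.
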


\begin{proof}
Suppose, to the contrary, that such a solution exists.  Fix a point
\(x_0\in\mathbb R^n\) and define
\[
        \widetilde u(x)
        =
        u(x+x_0)-u(x_0)-Du(x_0)\cdot x.
\]
Then
\[
        \widetilde u(0)=0,
        \quad
        D\widetilde u(0)=0,
        \quad
        \widetilde u\ge0
\]
by convexity.  The function \(\widetilde u\) also satisfies an equation of the
same form as \eqref{eq:LD-again}, with the same vectors \(a\) and \(b\), and
with
\(c_0\) replaced by
\[
        \widetilde c_0
        =
        c_0+a\cdot Du(x_0)-b\cdot x_0.
\]
The condition \(a\cdot b=0\) is unchanged, and
\[
        a\cdot D\widetilde u(x)
        =
        a\cdot Du(x+x_0)-a\cdot Du(x_0),
\]
so the oscillation assumption is unchanged as well.  Renaming
\(\widetilde u\) as \(u\), we may assume
\[
        u(0)=0,
        \quad
        Du(0)=0,
        \quad
        u\ge0.
\]
By Lemma~\ref{lem:properness-entire-convex},
\[
        u(x)\rightarrow+\infty
        \quad\text{as }|x|\rightarrow\infty.
\]

Set
\[
        w=a\cdot Du.
\]
Since \(w(0)=0\), the oscillation assumption implies
\[
        |w|\le W
        \quad\text{in }\mathbb R^n
\]
for some finite constant \(W\).

Write
\[
        g_{ij}=u_{ij},
        \quad
        (g^{ij})=(g_{ij})^{-1},
\]
and define the drifted linearized operator
\[
        \mathcal Lf
        =
        g^{ij}f_{ij}+a_kf_k.
\]
We use the summation convention and set
\[
        P=a_ra_su_{rs}=a^\top D^2u\,a>0,
        \quad
        T_{ij}=a_ku_{ijk}.
\]
Here and below,
\[
        |T|_g^2
        =
        g^{ip}g^{jq}T_{ij}T_{pq},
        \quad
        |\nabla P|_g^2
        =
        g^{ij}P_iP_j.
\]

Since
\[
        w_i=a_ku_{ki}=g_{ik}a_k,
\]
we have
\[
        P=g^{ij}w_iw_j=|\nabla w|_g^2,
        \qquad
        w_{ij}=a_ku_{kij}=T_{ij}.
\]

Taking the logarithm of \eqref{eq:LD-again} gives
\[
        \log\det(g_{ij})
        =
        -a\cdot Du+b\cdot x-c_0.
\]
Differentiating in the \(x_k\)-direction yields
\[
        g^{ij}u_{ijk}
        =
        b_k-a_\ell u_{\ell k}.
\]
Contracting with \(a_k\) and using \(a\cdot b=0\), we obtain
\begin{equation}\label{eq:null-trace-T}
        g^{ij}T_{ij}=-P.
\end{equation}
Since
\[
        a\cdot Dw=P,
\]
identity \eqref{eq:null-trace-T} also gives
\[
        \mathcal Lw
        =
        g^{ij}w_{ij}+a\cdot Dw
        =
        -P+P
        =
        0.
\]

Differentiating the logarithmic equation once more gives
\[
        g^{ij}u_{ijrs}
        -
        g^{ip}g^{jq}u_{ijr}u_{pqs}
        =
        -a_ku_{krs}.
\]
Multiplying by \(a_ra_s\), we find
\[
        g^{ij}P_{ij}
        -
        g^{ip}g^{jq}T_{ij}T_{pq}
        =
        -a_kP_k.
\]
Hence the first-order term in \(\mathcal L\) cancels the right-hand side,
and
\begin{equation}\label{eq:null-LP}
        \mathcal LP
        =
        g^{ip}g^{jq}T_{ij}T_{pq}
        =
        |T|_g^2.
\end{equation}
Differentiating the definition of \(P\) also gives
\begin{equation}\label{eq:null-gradient-P}
        P_i
        =
        a_ra_su_{rsi}
        =
        T_{ij}a_j.
\end{equation}

At a fixed point, choose a \(g\)-orthonormal basis whose first vector is
parallel to \(a\).  In this basis,
\[
        g_{ij}=\delta_{ij},
        \quad
        a=\sqrt P\,e_1.
\]
This choice is pointwise; no derivatives of the frame are taken.  Equations
\eqref{eq:null-LP}, \eqref{eq:null-trace-T}, and
\eqref{eq:null-gradient-P} become
\[
        \mathcal LP=|T|^2,
        \quad
        \tr T=-P,
\]
and
\begin{equation}\label{eq:null-P-gradient-frame}
        |\nabla P|_g^2
        =
        P\sum_iT_{1i}^2,
        \quad
        a\cdot DP
        =
        PT_{11}.
\end{equation}

Choose
\[
        \theta=\frac1{4(n-1)}.
\]
In particular,
\[
        0<\theta<1.
\]
The chain rule, \eqref{eq:null-LP}, and
\eqref{eq:null-P-gradient-frame} give
\[
\begin{aligned}
        \mathcal L(-P^{-\theta})
        &=
        \theta P^{-\theta-1}\mathcal LP
        -
        \theta(\theta+1)P^{-\theta-2}
        |\nabla P|_g^2 \\
        &=
        \theta P^{-\theta-1}
        \left(
        |T|^2-(\theta+1)\sum_iT_{1i}^2
        \right).
\end{aligned}
\]

Write
\[
        T=
        \begin{pmatrix}
        \tau&v^\top\\
        v&B
        \end{pmatrix},
        \quad
        \tau=T_{11},
\]
where
\[
        v=(T_{12},\ldots,T_{1n})
\]
and
\[
        B=(T_{\alpha\beta})_{2\le\alpha,\beta\le n}.
\]
Then
\[
        |T|^2=\tau^2+2|v|^2+|B|^2
\]
and
\[
        \sum_iT_{1i}^2=\tau^2+|v|^2.
\]
Consequently,
\[
\begin{aligned}
        |T|^2-(\theta+1)\sum_iT_{1i}^2
        &=
        -\theta\tau^2
        +(1-\theta)|v|^2
        +|B|^2 \\
        &\ge
        -\theta\tau^2+|B|^2.
\end{aligned}
\]
By \eqref{eq:null-trace-T},
\[
        \tr B=-P-\tau.
\]
Since \(B\) is an \((n-1)\times(n-1)\) symmetric matrix, the
Cauchy--Schwarz inequality gives
\[
        |B|^2
        \ge
        \frac{(\tr B)^2}{n-1}
        =
        \frac{(P+\tau)^2}{n-1}.
\]
Thus
\begin{equation}\label{eq:null-coercive-P}
        \mathcal L(-P^{-\theta})
        \ge
        \theta P^{-\theta-1}
        \left(
        \frac{(P+\tau)^2}{n-1}
        -\theta\tau^2
        \right).
\end{equation}

For \(\varepsilon>0\), define
\[
        F_\varepsilon
        =
        -P^{-\theta}-\varepsilon u.
\]
Since \(P>0\), one has \(-P^{-\theta}\le0\).  Together with
\(u(x)\to+\infty\), this gives
\[
        F_\varepsilon(x)\rightarrow-\infty
        \quad\text{as }|x|\rightarrow\infty.
\]
Thus \(F_\varepsilon\) attains a global maximum at some point
\(x_\varepsilon\).

At \(x_\varepsilon\),
\[
        DF_\varepsilon=0,
\]
and hence
\[
        \theta P^{-\theta-1}DP
        =
        \varepsilon Du.
\]
Choose at \(x_\varepsilon\) the adapted \(g\)-orthonormal basis used above.
Contracting the preceding identity with \(a\) and using
\eqref{eq:null-P-gradient-frame}, we obtain
\[
        \theta P^{-\theta-1}(P\tau)
        =
        \varepsilon w.
\]
Thus
\begin{equation}\label{eq:null-tau-bound}
        \tau
        =
        \frac{\varepsilon}{\theta}wP^\theta,
        \quad
        |\tau|
        \le
        \frac{\varepsilon W}{\theta}P^\theta.
\end{equation}
All quantities in the rest of the argument are evaluated at
\(x_\varepsilon\).

Suppose first that
\[
        |\tau|\ge\frac P2.
\]
Then \eqref{eq:null-tau-bound} gives
\[
        \frac P2
        \le
        \frac{\varepsilon W}{\theta}P^\theta,
\]
and hence
\begin{equation}\label{eq:null-case-one}
        P^{1-\theta}
        \le
        \frac{2W}{\theta}\varepsilon.
\end{equation}

Suppose instead that
\[
        |\tau|<\frac P2.
\]
Then
\[
        P+\tau\ge\frac P2
\]
and
\[
        \tau^2<\frac{P^2}{4}.
\]
Therefore
\[
\begin{aligned}
        \frac{(P+\tau)^2}{n-1}-\theta\tau^2
        &\ge
        \frac{P^2}{4(n-1)}
        -
        \frac{\theta P^2}{4} \\
        &=
        \frac{P^2}{4}
        \left(
        \frac1{n-1}-\theta
        \right).
\end{aligned}
\]
Since
\[
        \theta=\frac1{4(n-1)},
\]
the right-hand side equals
\[
        \frac{3P^2}{16(n-1)}
        \ge
        \frac{P^2}{8(n-1)}.
\]
Estimate \eqref{eq:null-coercive-P} gives
\[
        \mathcal L(-P^{-\theta})
        \ge
        \frac{\theta}{8(n-1)}P^{1-\theta}.
\]

At the maximum point,
\[
        DF_\varepsilon(x_\varepsilon)=0
\]
and
\[
        D^2F_\varepsilon(x_\varepsilon)\le0.
\]
The first-order part of \(\mathcal L\) therefore vanishes there, and
\[
        \mathcal LF_\varepsilon(x_\varepsilon)
        =
        g^{ij}(F_\varepsilon)_{ij}(x_\varepsilon)
        \le0.
\]
At the same point,
\[
        \mathcal Lu
        =
        g^{ij}u_{ij}+a\cdot Du
        =
        n+w
        \le
        n+W.
\]
Hence
\[
\begin{aligned}
        0
        &\ge
        \mathcal LF_\varepsilon=
        \mathcal L(-P^{-\theta})
        -
        \varepsilon\mathcal Lu \\
        &\ge
        \frac{\theta}{8(n-1)}P^{1-\theta}
        -
        \varepsilon(n+W).
\end{aligned}
\]
Hence
\begin{equation}\label{eq:null-case-two}
        P^{1-\theta}
        \le
        \frac{8(n-1)(n+W)}{\theta}\varepsilon.
\end{equation}

Combining \eqref{eq:null-case-one} and
\eqref{eq:null-case-two}, we obtain
\[
        P(x_\varepsilon)^{1-\theta}
        \le
        C\varepsilon,
\]
where \(C\) is independent of \(\varepsilon\).  Since
\(1-\theta>0\), this implies
\begin{equation}\label{eq:null-P-small}
        P(x_\varepsilon)\rightarrow0
        \quad\text{as }\varepsilon\downarrow0.
\end{equation}

On the other hand, maximality of \(F_\varepsilon\), together with
\(u(0)=0\) and \(u\ge0\), gives
\[
\begin{aligned}
        -P(x_\varepsilon)^{-\theta}
        \ge
        -P(x_\varepsilon)^{-\theta}
        -
        \varepsilon u(x_\varepsilon) =
        F_\varepsilon(x_\varepsilon)\ge
        F_\varepsilon(0) =
        -P(0)^{-\theta}.
\end{aligned}
\]
Thus
\[
        P(x_\varepsilon)\ge P(0)>0.
\]
This contradicts \eqref{eq:null-P-small}.
\end{proof}

\begin{rem}
\label{rem:LiXu-comparison}
When \(b=0\), Proposition~\ref{prop:bounded-null-rigidity} gives another proof
of the Li--Xu rigidity theorem \cite{LiXu09} in the smooth strictly convex
case.  The quantity \(P\) used above is directly related to their affine
invariant.  Indeed, let \(f=u^*\).  Then
\[
        \det D^2f=e^{a\cdot y+c_0}.
\]
If
\[
        \rho=(\det D^2f)^{-1/(n+2)},
        \qquad
        \Phi=\frac{f^{ij}\rho_i\rho_j}{\rho^2},
\]
as in \cite{LiXu09}, then at the dual point \(y=Du(x)\),
\[
        \Phi(y)=\frac{1}{(n+2)^2}a^\top D^2u(x)a
        =\frac{P(x)}{(n+2)^2}.
\]
Thus, up to a constant factor, \(P\) is the same scalar quantity in the
original variables.  In the null-coupling case it satisfies
\[
        \mathcal Lw=0,
        \qquad
        P=|\nabla w|_{D^2u}^2,
        \qquad
        \mathcal LP=|T|_{D^2u}^2,
        \qquad
        \operatorname{tr}_{D^2u}T=-P,
\]
which leads to the maximum-principle argument above.
\end{rem}

\begin{rem}
The mixed null case \(a\neq0\), \(b\neq0\), \(a\cdot b=0\) is not reduced to
the pure gradient-drift case by the natural changes of variables preserving
entire convex gradient graphs.  If \(S\in GL(n,\R)\) and
\[
        v(z)=u(Sz)+q\cdot z+d,
\]
then the drift vectors become
\[
        a'=S^{-1}a,
        \qquad
        b'=S^\top b,
        \qquad
        a'\cdot b'=a\cdot b.
\]
Hence an invertible change of the base variables cannot eliminate a nonzero
spatial drift.  The Legendre transform exchanges the two drift vectors, but
it is defined on \(Du(\R^n)\), which need not be all of \(\R^n\).  Therefore
it does not in general preserve the class of entire solutions considered
here.
\end{rem}

Lemma~\ref{lem:borderline} and Proposition~\ref{prop:bounded-null-rigidity}
give the null-coupling obstruction.

\begin{prop}
\label{prop:null-nonexistence}
Assume that
\[
        a\neq0,
        \quad
        a\cdot b=0.
\]
Then \eqref{eq:LD-again} admits no entire smooth strictly convex solution.
\end{prop}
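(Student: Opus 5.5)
The plan is to split by dimension.

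For \(n\ge2\), suppose \(u\) is an entire smooth strictly convex solution of \eqref{eq:LD-again} with \(a\neq0\) and \(a\cdot b=0\). Lemma~\ref{lem:borderline} gives
\[
        \osc_{\mathbb R^n}(a\cdot Du)\le 3n+3<\infty.
\]
Since \(u\) is smooth, in particular \(u\in C^4(\mathbb R^n)\), all hypotheses of Proposition~\ref{prop:bounded-null-rigidity} are met. That proposition says no such solution exists, which is a contradiction. So for \(n\ge2\) the statement is just the combination of these two results, and nothing new is needed.

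The remaining case is \(n=1\), where \(a\cdot b=0\) with \(a\neq0\) forces \(b=0\). The equation becomes
\[
        u''=\exp\{-a u'-c_0\},\qquad u''>0 .
\]
I would set \(p=u'\), which gives the autonomous ODE \(p'=e^{-c_0}e^{-ap}\). Multiplying by \(e^{ap}\) gives
\[
        \bigl(e^{ap}\bigr)'=a e^{-c_0},
\]
so
\[
        e^{ap(x)}=a e^{-c_0}x+K
\]
for some constant \(K\). The left-hand side is positive for every \(x\in\mathbb R\), while the right-hand side is a nonconstant affine function of \(x\) because \(a\neq0\). A nonconstant affine function becomes negative for \(x\) of one sign and large modulus, so it cannot stay positive on all of \(\mathbb R\). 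Hence \(p\) blows up in finite \(x\) and no entire solution exists.

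Alternatively, the one-dimensional case could be handled by observing that the oscillation bound of Lemma~\ref{lem:borderline} also holds for \(n=1\). Then \(u''=e^{-au'-c_0}\) is bounded below by a positive constant, so \(u'\) is unbounded, which contradicts the bound on \(au'\). This is just as short. I would present the explicit ODE computation.

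There is no real obstacle here. The only point to check is that Proposition~\ref{prop:bounded-null-rigidity} requires \(n\ge2\), which is why the one-dimensional case has to be treated separately.
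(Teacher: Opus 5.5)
Your proposal is correct and follows the paper's proof essentially verbatim. For $n\ge2$ you combine Lemma~\ref{lem:borderline} with Proposition~\ref{prop:bounded-null-rigidity}, and for $n=1$ you use $b=0$ and $(e^{au'})'=ae^{-c_0}$ to contradict positivity of a nonconstant affine function on $\mathbb R$. Your alternative one-dimensional argument via the oscillation bound is also valid, since the proof of Lemma~\ref{lem:borderline} does not use $n\ge2$.
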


\begin{proof}
Assume first that \(n\ge2\).  Lemma~\ref{lem:borderline} gives
\[
        \osc_{\mathbb R^n}(a\cdot Du)<\infty.
\]
The conclusion therefore follows from
Proposition~\ref{prop:bounded-null-rigidity}.

If \(n=1\), then \(a\neq0\) and \(ab=0\) imply \(b=0\).  The equation is
\[
        u''=e^{-au'-c_0}.
\]
Consequently,
\[
        \bigl(e^{au'}\bigr)'
        =
        ae^{-c_0}\neq0.
\]
Thus \(e^{au'}\) is a nonconstant affine function of \(x\), which cannot
remain positive on all of \(\mathbb R\).  This is a contradiction.
\end{proof}

It remains to construct non-quadratic entire solutions in the two solvable regimes.

Suppose first that \(a=0\) and \(b\neq0\).  After an orthogonal change of
coordinates, write
\[
        b=\beta e_n,
        \quad
        \beta=|b|>0.
\]
Then
\[
        u(x',x_n)
        =
        \frac12|x'|^2
        +
        \frac{e^{-c_0}}{\beta^2}e^{\beta x_n}
\]
is smooth and strictly convex, and
\[
        \det D^2u
        =
        e^{\beta x_n-c_0}
        =
        e^{b\cdot x-c_0}.
\]
It is plainly non-quadratic.

Assume now that
\[
        a\cdot b>0.
\]
Choose a symmetric positive definite matrix \(A\) satisfying
\[
        Aa=b.
\]
For \(n=1\), simply take \(A=b/a>0\).  For \(n\ge2\), use an orthonormal basis
whose first vector is \(e_1=a/|a|\), and write
\[
        \frac{b}{|a|}
        =
        \begin{pmatrix}
        \alpha\\
        \beta
        \end{pmatrix},
        \quad
        \alpha=\frac{a\cdot b}{|a|^2}>0,
        \quad
        \beta\in\mathbb R^{n-1}.
\]
For any
\[
        \lambda>\frac{|\beta|^2}{\alpha},
\]
the matrix
\[
        A=
        \begin{pmatrix}
        \alpha&\beta^\top\\
        \beta&\lambda I_{n-1}
        \end{pmatrix}
\]
is positive definite by the Schur complement criterion and satisfies
\(Aa=b\).

First note that the condition \(Aa=b\) also gives quadratic solutions.  Since
\(a\neq0\), choose \(\ell_0\in\mathbb R^n\) so that
\[
        a\cdot\ell_0=-c_0-\log\det A.
\]
Then
\[
        u_0(x)=\frac12 x^\top A x+\ell_0\cdot x
\]
solves \eqref{eq:LD-again}.  To show that the positive-coupling regime also
contains non-quadratic solutions, choose \(\ell\in\mathbb R^n\) such that
\[
        \gamma=a\cdot\ell\neq0,
\]
and set
\[
        \mu=\ell^\top A^{-1}\ell>0,
        \quad
        K_0=\frac{e^{-c_0}}{\det A}.
\]
Fix \(C_0>0\), and define a smooth function \(H\) on \(\mathbb R\), up to an
additive constant, by
\[
        e^{\gamma H'(t)}
        =
        K_0+C_0e^{-\gamma t/\mu}.
\]
Differentiating gives
\[
        H''(t)
        =
        -\frac1\mu
        \frac{C_0e^{-\gamma t/\mu}}
        {K_0+C_0e^{-\gamma t/\mu}},
\]
and hence
\begin{equation}\label{eq:front-convexity}
        1+\mu H''(t)
        =
        \frac{K_0}
        {K_0+C_0e^{-\gamma t/\mu}}
        =
        K_0e^{-\gamma H'(t)}
        >0.
\end{equation}

Define
\[
        u(x)
        =
        \frac12 x^\top A x+H(\ell\cdot x).
\]
Then
\[
        D^2u
        =
        A+H''(\ell\cdot x)\,\ell\otimes\ell.
\]
Since \(A>0\), the rank-one update criterion and
\eqref{eq:front-convexity} imply that \(D^2u>0\).  The matrix determinant
lemma gives
\[
        \det D^2u
        =
        \det A\,
        \bigl(1+\mu H''(\ell\cdot x)\bigr).
\]
Since \(A\) is symmetric and \(Aa=b\),
\[
\begin{aligned}
        -a\cdot Du+b\cdot x
        &=
        -a\cdot
        \bigl(Ax+H'(\ell\cdot x)\ell\bigr)
        +b\cdot x \\
        &=
        -\gamma H'(\ell\cdot x).
\end{aligned}
\]
Using \eqref{eq:front-convexity} and the definition of \(K_0\), we conclude
that
\[
\begin{aligned}
        \det D^2u
        &=
        \det A\,K_0e^{-\gamma H'(\ell\cdot x)} \\
        &=
        \exp\{-a\cdot Du+b\cdot x-c_0\}.
\end{aligned}
\]
Because \(C_0>0\), the function \(H\) is not quadratic, and neither is \(u\).

\begin{rem}
Notice that the positive-coupling construction cannot remain uniformly convex as
\(a\cdot b\downarrow0\) with \(a\neq0\).  Indeed, every positive definite
matrix \(A\) satisfying \(Aa=b\) obeys
\[
        a^\top A a=a\cdot b,
        \quad
        \lambda_{\min}(A)
        \le
        \frac{a\cdot b}{|a|^2}.
\]
Thus the quadratic background degenerates as the null threshold is
approached.
\end{rem}

\begin{proof}[Proof of Theorem~\ref{thm:linear-dichotomy}]
If \(a=b=0\), the classical J\"orgens--Calabi--Pogorelov theorem gives part
\textup{(i)}.

Suppose that \(a\neq0\) and \(a\cdot b\le0\).  If \(a\cdot b<0\),
Proposition~\ref{prop:negative-coupling} gives nonexistence.  If
\(a\cdot b=0\), Proposition~\ref{prop:null-nonexistence} gives
nonexistence.  This proves part \textup{(ii)}.

The two explicit constructions above prove part \textup{(iii)} and complete
the proof.
\end{proof}
\section{The periodic cell problem}
\label{sec:periodic-cell}

Throughout this section,
\(\T^n=\R^n/\mathbb Z^n\)
is normalized to have volume one.  Let
\[
        A=A^\top>0,
        \quad
        a\in\R^n,
        \quad
        V\in C^\infty(\T^n).
\]
We prove Theorem~\ref{thm:cell}, namely the existence and uniqueness of a
constant \(c_A\in\R\) and a zero-average function
\(\psi_A\in C^\infty(\T^n)\) satisfying
\begin{equation}\label{eq:cell-again}
        \det(A+D^2\psi_A)
        =
        \exp\{-a\cdot D\psi_A+V-c_A\},
        \quad
        A+D^2\psi_A>0
        \quad\text{on }\T^n.
\end{equation}
We write \(c_A=c(A;a,V)\) when the dependence on the other data is relevant.

Consider the continuity path
\begin{equation}\label{eq:cell-path}
        \log\det(A+D^2\psi)
        +t\,a\cdot D\psi
        -tV+c=0,
        \quad
        t\in[0,1].
\end{equation}
\subsection{Comparison and uniqueness}

We use the following comparison principle.

\begin{lem}[Strong comparison principle]
\label{lem:strong-comparison}
Let \(\Omega\subset\R^n\) be connected, let \(N=N^\top\) be a smooth matrix
field on \(\Omega\), and let \(u_1,u_2\in C^2(\Omega)\) satisfy
\[
        N+D^2u_i>0,
        \quad i=1,2.
\]
Assume that
\[
        \log\det(N+D^2u_1)+a\cdot Du_1
        =
        \log\det(N+D^2u_2)+a\cdot Du_2
        \quad\text{in }\Omega.
\]
If
\[
        u_1\le u_2
        \quad\text{in }\Omega
\]
and equality holds at an interior point, then
\[
        u_1\equiv u_2
        \quad\text{in }\Omega.
\]
\end{lem}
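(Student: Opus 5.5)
The plan is to linearize the equation along the segment between the two solutions and then apply the Hopf strong maximum principle to the difference. Set \(v=u_2-u_1\ge0\), so that \(v\) vanishes at an interior point. For \(s\in[0,1]\) put
\[
        u_s=u_1+s v,
        \qquad
        M_s=N+D^2u_s=(1-s)(N+D^2u_1)+s(N+D^2u_2).
\]
As a convex combination of positive definite matrices, \(M_s\) is positive definite for every \(s\). Write \(F(M,p)=\log\det M+a\cdot p\). The hypothesis then reads
\[
        0=F(M_1,Du_2)-F(M_0,Du_1)
        =\int_0^1\frac{d}{ds}F(M_s,Du_s)\,ds
        =a^{ij}(x)v_{ij}+a\cdot Dv,
\]
where \(a^{ij}(x)=\int_0^1 (M_s^{-1})^{ij}\,ds\).

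Next we check that this is a genuine linear elliptic equation. The coefficient matrix \((a^{ij})\) is symmetric and positive definite at every point. It is also continuous in \(x\): \(M_s^{-1}\) depends continuously on \((x,s)\) because \(u_i\in C^2\) and \(N\) is smooth. Hence on every compact subset \(K\Subset\Omega\), compactness of \(K\times[0,1]\) gives uniform bounds \(\lambda_K I\le (a^{ij})\le\Lambda_K I\). So \(v\) is a \(C^2\) solution of \(Lv=a^{ij}v_{ij}+a_kv_k=0\), with \(L\) locally uniformly elliptic, with bounded (in fact constant) first-order coefficients and no zeroth-order term.

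Now apply the strong maximum principle of Hopf to \(-v\). It satisfies \(L(-v)=0\ge0\), it is \(\le0\) in \(\Omega\), and it attains its maximum \(0\) at an interior point. This version of the principle requires only \(C^2\) regularity, local uniform ellipticity and locally bounded coefficients; continuity of \(a^{ij}\) is not even needed. It follows that the set \(\{v=0\}\) is open in every ball compactly contained in \(\Omega\). The set is also closed by continuity, and it is nonempty, so connectedness of \(\Omega\) gives \(v\equiv0\), that is, \(u_1\equiv u_2\).

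No step is a serious obstacle. The only point that needs care is that, in general, the linearized coefficients are merely continuous rather than smooth. We therefore invoke the Hopf strong maximum principle in its classical form, valid for \(C^2\) functions and bounded measurable coefficients with local ellipticity, and not a version requiring smooth coefficients. Because the zeroth-order term is absent, the sign of the extremum value causes no difficulty.
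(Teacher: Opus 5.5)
Your proposal is correct and follows essentially the same route as the paper: you integrate the derivative of \(\log\det\) along the segment between the two solutions, obtaining a locally uniformly elliptic linear equation for the difference with averaged inverse-matrix coefficients and constant drift \(a\), and you then apply the Hopf strong maximum principle. Your additional remarks, namely that \(C^2\) regularity with merely continuous coefficients suffices and that connectedness is used through the open--closed argument, spell out details the paper leaves implicit.
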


\begin{proof}
Set
\[
        w=u_1-u_2.
\]
Subtracting the two equations and integrating the logarithmic determinant
along the segment \(u_2+s w\), \(s\in[0,1]\), gives
\[
        B^{ij}w_{ij}+a\cdot Dw=0,
\]
where
\[
        B^{ij}(x)
        =
        \int_0^1
        \bigl(
        N+D^2u_2+sD^2w
        \bigr)^{-1}_{ij}
        \,\mathrm{d}s.
\]
The positive definite cone is convex, so every matrix inside the integral is
positive definite.  Thus the resulting linear operator is uniformly elliptic
on compact subsets of \(\Omega\).  Since \(w\le0\) and \(w\) attains an
interior maximum, the strong maximum principle gives \(w\equiv0\).
\end{proof}

\begin{lem}
\label{lem:cell-uniqueness}
Suppose that \((\psi_1,c_1)\) and \((\psi_2,c_2)\) are two smooth solutions of
\eqref{eq:cell-again}.  Then \(c_1=c_2\), and \(\psi_1-\psi_2\) is
constant.  In particular, if both functions have zero average, then
\(\psi_1=\psi_2\).
\end{lem}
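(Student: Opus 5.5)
The plan is to compare the two solutions at extremal points of their difference on the compact torus, first to identify the constants and then to identify the functions via Lemma~\ref{lem:strong-comparison}. Write \(w=\psi_1-\psi_2\), which is smooth on \(\T^n\). Lift \(\psi_1\) and \(\psi_2\) to \(\mathbb Z^n\)-periodic functions on \(\R^n\). Each solution then satisfies
\[
        \log\det(A+D^2\psi_i)+a\cdot D\psi_i=V-c_i .
\]

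\emph{Step 1: \(c_1=c_2\).} Let \(x_0\) be a maximum point of \(w\). At \(x_0\) we have \(Dw=0\) and \(D^2w\le0\). Hence \(D\psi_1=D\psi_2\) and \(A+D^2\psi_1\le A+D^2\psi_2\) at \(x_0\). Both matrices are positive definite, and \(\log\det\) is monotone in the Loewner order, so
\[
        V(x_0)-c_1=\log\det(A+D^2\psi_1)+a\cdot D\psi_1\le\log\det(A+D^2\psi_2)+a\cdot D\psi_2=V(x_0)-c_2 .
\]
This gives \(c_1\ge c_2\). Repeating the argument at a minimum point of \(w\) gives \(c_1\le c_2\).

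\emph{Step 2: \(w\) is constant.} Since \(c_1=c_2\), the two equations coincide with \(N=A\) in Lemma~\ref{lem:strong-comparison}. Set \(u_1=\psi_1-\max w\) and \(u_2=\psi_2\). Subtracting a constant does not change the equation, so \(u_1\) is still a solution. On \(\R^n\) we have \(u_1\le u_2\), with equality at a lift of \(x_0\). Lemma~\ref{lem:strong-comparison}, applied on the connected set \(\Omega=\R^n\), then yields \(u_1\equiv u_2\), so \(\psi_1-\psi_2\equiv\max w\) is constant. If both functions have zero average, this constant is zero.

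There is no genuine obstacle here. The only points needing care are the ordering of determinants in Step~1, which uses monotonicity of \(\log\det\) on positive definite matrices, and confirming that Lemma~\ref{lem:strong-comparison} applies to the lifted periodic functions on all of \(\R^n\). The lemma's proof already covers this, since ellipticity there is only required on compact subsets.
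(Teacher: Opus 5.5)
Your proposal is correct and follows essentially the same route as the paper. You compare at extremal points of \(\psi_1-\psi_2\) to get \(c_1=c_2\), then apply Lemma~\ref{lem:strong-comparison} on the periodic lift after shifting by \(\max(\psi_1-\psi_2)\); the paper writes this shift as \(\psi_2+m\) rather than \(\psi_1-\max w\), which is the same argument.
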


\begin{proof}
Let
\[
        w=\psi_1-\psi_2.
\]
At a maximum point \(x_+\) of \(w\),
\[
        Dw(x_+)=0,
        \quad
        D^2w(x_+)\le0.
\]
Hence
\[
        A+D^2\psi_1(x_+)
        \le
        A+D^2\psi_2(x_+).
\]
The determinant is monotone on the positive definite cone, and
\(D\psi_1(x_+)=D\psi_2(x_+)\).  Evaluating the equations at \(x_+\) therefore
gives
\(c_1\ge c_2\).
Applying the same argument at a minimum point of \(w\) gives
\(c_1\le c_2\).
Hence \(c_1=c_2\).

Let
\[
        m=\max_{\T^n}(\psi_1-\psi_2).
\]
The equation is invariant under adding constants to \(\psi\), so
\(\psi_2+m\) satisfies the same equation as \(\psi_2\), and
\[
        \psi_1\le\psi_2+m
\]
and equality holds at an interior point of the periodic lift to \(\R^n\).
Lemma~\ref{lem:strong-comparison} gives
\[
        \psi_1\equiv\psi_2+m.
\]
If both functions have zero average, then \(m=0\).
\end{proof}

We record some elementary properties of the normalizing constant.

\begin{lem}
\label{lem:cell-constant-comparison}
The following statements hold.

\begin{enumerate}[label=\textnormal{(\roman*)}]
\item
Suppose that \((\psi,c)\) solves
\[
        \det(A+D^2\psi)
        =
        e^{-t a\cdot D\psi+tV-c},
        \quad
        A+D^2\psi>0,
\]
for some \(t\in[0,1]\).  Then
\begin{equation}\label{eq:cell-c-sharp-bounds}
        t\min_{\T^n}V-\log\det A
        \le
        c
        \le
        t\max_{\T^n}V-\log\det A.
\end{equation}

\item
For fixed \(A\) and \(a\), suppose that \((\psi_i,c_i)\) solves the cell
problem with potential \(V_i\), \(i=1,2\).  Then
\begin{equation}\label{eq:cell-V-comparison}
        \min_{\T^n}(V_1-V_2)
        \le
        c_1-c_2
        \le
        \max_{\T^n}(V_1-V_2).
\end{equation}
In particular,
\begin{equation}\label{eq:cell-c-Linfty-stability}
        |c_1-c_2|
        \le
        \|V_1-V_2\|_{L^\infty(\T^n)}.
\end{equation}

\item
Fix \(a\) and \(V\).  Suppose that \(A_1,A_2>0\) are symmetric and
\[
        A_1\le A_2
\]
in the sense of quadratic forms.  If \(c_{A_i}\) denotes the corresponding
normalizing constant, then
\[
        c_{A_1}\ge c_{A_2}.
\]
If \(A_2-A_1>0\), then the inequality is strict.
\end{enumerate}
\end{lem}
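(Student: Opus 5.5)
The plan is to evaluate the equations at extremum points of \(\psi\) or of a difference of solutions, where the gradient terms either vanish or cancel and the Hessian terms are ordered. This is the same device used in Lemma~\ref{lem:cell-uniqueness}.

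For (i), take a maximum point \(x_+\) of \(\psi\) on \(\T^n\). There \(D\psi(x_+)=0\) and \(D^2\psi(x_+)\le0\), so \(0<A+D^2\psi(x_+)\le A\). Monotonicity of the determinant on the positive cone then gives \(\det(A+D^2\psi(x_+))\le\det A\). Taking logarithms in the equation yields
\[
tV(x_+)-c\le\log\det A,
\qquad\text{so}\qquad
c\ge tV(x_+)-\log\det A\ge t\min V-\log\det A .
\]
At a minimum point \(x_-\) we have \(D^2\psi(x_-)\ge0\), hence \(\det(A+D^2\psi(x_-))\ge\det A\). This gives \(c\le tV(x_-)-\log\det A\le t\max V-\log\det A\).

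For (ii), set \(w=\psi_1-\psi_2\) and take a maximum point \(x_+\) of \(w\). There \(D\psi_1=D\psi_2\), so the drift terms agree, and \(A+D^2\psi_1\le A+D^2\psi_2\). Subtracting the logarithmic equations gives
\[
V_1(x_+)-c_1\le V_2(x_+)-c_2,
\qquad\text{so}\qquad
c_1-c_2\ge (V_1-V_2)(x_+)\ge\min(V_1-V_2).
\]
The minimum point of \(w\) gives the upper bound in the same way. The \(L^\infty\) stability estimate \eqref{eq:cell-c-Linfty-stability} follows immediately.

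For (iii), let \(\psi_i\) solve the cell problem for \(A_i\) with the same \(a\) and \(V\). I will take a minimum point \(x_-\) of \(w=\psi_1-\psi_2\), where \(D\psi_1=D\psi_2\) and \(D^2\psi_1\ge D^2\psi_2\). Monotonicity is not directly available here, since \(A_1\le A_2\) but \(D^2\psi_1\ge D^2\psi_2\). The better choice is a maximum point \(x_+\) of \(w\), where \(D^2\psi_1(x_+)\le D^2\psi_2(x_+)\). Then
\[
A_1+D^2\psi_1(x_+)\le A_2+D^2\psi_2(x_+),
\]
with equal gradients and equal \(V(x_+)\). Comparing the equations at \(x_+\) gives \(-c_{A_1}\le -c_{A_2}\), that is, \(c_{A_1}\ge c_{A_2}\). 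If \(A_2-A_1>0\), the matrix inequality at \(x_+\) is \(M_1\le M_1+(A_2-A_1)\le M_2\) with \(M_1>0\) and \(A_2-A_1>0\). Strict monotonicity of \(\det\) under adding a positive definite matrix, \(\det(M+E)>\det M\) for \(M,E>0\), then makes the inequality strict.

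The only delicate point is picking the right extremum in (iii) and justifying strict determinant monotonicity. The latter follows from writing \(\det(M+E)=\det M\det(I+M^{-1/2}EM^{-1/2})\), where every eigenvalue of the second factor exceeds \(1\). Existence of the solutions involved is assumed in the statement, so no continuity-method input is needed.
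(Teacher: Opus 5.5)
Your proposal is correct and follows the paper's proof essentially step for step. You use extremum points of \(\psi\) for (i), maximum and minimum points of \(\psi_1-\psi_2\) for (ii), and the maximum point of \(\psi_1-\psi_2\) for (iii), where the gradients coincide and the Hessians are ordered. Your justification of strict determinant monotonicity via \(\det(M+E)=\det M\,\det(I+M^{-1/2}EM^{-1/2})\) fills in a step the paper states without proof.
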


\begin{proof}
Let \(x_+\) be a maximum point of \(\psi\).  Then
\[
        D\psi(x_+)=0,
        \quad
        D^2\psi(x_+)\le0.
\]
Thus
\[
        A+D^2\psi(x_+)\le A,
\]
and hence
\[
        e^{tV(x_+)-c}
        =
        \det(A+D^2\psi(x_+))
        \le
        \det A.
\]
Therefore
\[
        c
        \ge
        tV(x_+)-\log\det A
        \ge
        t\min_{\T^n}V-\log\det A.
\]
The upper bound follows similarly by evaluating at a minimum point of
\(\psi\), where \(D^2\psi\ge0\).  This proves
\eqref{eq:cell-c-sharp-bounds}.

For the second statement, let \(x_+\) be a maximum point of
\(\psi_1-\psi_2\).  At \(x_+\),
\[
        D\psi_1=D\psi_2,
        \quad
        D^2\psi_1\le D^2\psi_2.
\]
It follows that
\[
        V_1(x_+)-c_1
        \le
        V_2(x_+)-c_2,
\]
and hence
\[
        c_1-c_2
        \ge
        V_1(x_+)-V_2(x_+)
        \ge
        \min_{\T^n}(V_1-V_2).
\]
Applying the same argument at a minimum point of
\(\psi_1-\psi_2\) gives the opposite bound in
\eqref{eq:cell-V-comparison}.  Estimate
\eqref{eq:cell-c-Linfty-stability} follows immediately.

Finally, let \(\psi_i\) be the solution corresponding to \(A_i\), and let
\(x_+\) be a maximum point of \(\psi_1-\psi_2\).  Then
\[
        D^2\psi_1(x_+)\le D^2\psi_2(x_+),
\]
so
\[
        A_1+D^2\psi_1(x_+)
        \le
        A_2+D^2\psi_2(x_+).
\]
Comparing the two equations at \(x_+\) gives
\(c_{A_1}\ge c_{A_2}\).
If \(A_2-A_1>0\), the matrix inequality at \(x_+\) is strict, and the strict
monotonicity of the determinant on the positive definite cone gives
\(c_{A_1}>c_{A_2}\).
\end{proof}

For fixed \(A\) and \(a\), we write \(c_A[V]\) for the normalizing constant
associated with the potential \(V\).
Lemma~\ref{lem:cell-constant-comparison} also gives
\[
        V_1\le V_2
        \quad\Longrightarrow\quad
        c_A[V_1]\le c_A[V_2],
\]
and
\[
        c_A[V+\lambda]=c_A[V]+\lambda
        \quad
        \text{for every constant }\lambda\in\R.
\]

\subsection{A priori estimates}

For \(t\in[0,1]\), suppose that \((\psi,c)\) is a smooth solution of
\eqref{eq:cell-path}, normalized by
\[
        \int_{\T^n}\psi=0.
\]
Set
\[
        M=A+D^2\psi.
\]
All estimates below are independent of \(t\).  The first one follows from
periodicity and semiconvexity.

\begin{lem}
\label{lem:periodic-semiconvexity}
There exists a constant \(C=C(A,n)\) such that every solution along the
continuity path satisfies
\begin{equation}\label{eq:C1-cell}
        \|D\psi\|_{L^\infty(\T^n)}
        \le C
\end{equation}
and
\begin{equation}\label{eq:C0-cell}
        \|\psi\|_{L^\infty(\T^n)}
        \le C.
\end{equation}
\end{lem}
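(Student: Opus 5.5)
The plan is to use only the convexity of \(\Phi(x)=\frac12x^\top Ax+\psi(x)\), which follows from \(M=A+D^2\psi>0\), together with the periodicity and zero average of \(\psi\). The equation itself plays no role. In particular, \(D^2\psi\ge -A\ge -\lambda_{\max}(A)\,I\), so \(\psi\) is semiconvex with a constant depending only on \(A\).

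\textbf{Gradient bound.} Fix \(x\in\R^n\) and a unit vector \(e\), and consider \(g(s)=\psi(x+se)\). Semiconvexity gives \(g''\ge-\Lambda\) with \(\Lambda=\lambda_{\max}(A)\). Hence for \(s\in[0,R]\),
\[
g(s)\ge g(0)+sg'(0)-\tfrac{\Lambda}{2}s^2 .
\]
Periodicity makes \(\psi\) bounded, but we want a bound independent of \(\osc\psi\), so I argue as follows.

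Let \(x_0\) be a point where \(\psi\) attains its minimum. The function \(h(s)=\psi(x+se)\) satisfies \(h'(s)-h'(0)\ge-\Lambda s\). For \(k\in\mathbb Z^n\), the quantity \(\int_0^1 D\psi(x+sk)\cdot k\,ds=\psi(x+k)-\psi(x)\) vanishes. Applying this along the lattice directions \(e_i\) gives
\[
0=\int_0^1\partial_i\psi(x+se_i)\,ds\ge \partial_i\psi(x)-\tfrac{\Lambda}{2}.
\]
Hence \(\partial_i\psi\le\Lambda/2\). Integrating backward over \([-1,0]\) gives \(\partial_i\psi(x)\ge-\Lambda/2\). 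Therefore \(|D\psi|\le \sqrt n\,\Lambda/2\), which proves \eqref{eq:C1-cell} with \(C=C(A,n)\).

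\textbf{\(C^0\) bound.} Since \(\int_{\T^n}\psi=0\), \(\psi\) vanishes at some point of the unit cube. Every point of the torus is within distance \(\sqrt n\) of that point in a fundamental domain, so the mean value theorem and the gradient bound give \(\|\psi\|_{L^\infty}\le \sqrt n\cdot\sqrt n\Lambda/2\). This proves \eqref{eq:C0-cell}.

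\textbf{Expected obstacle.} The only point needing care is the semiconvexity-plus-periodicity argument for the one-sided derivative bound. It works because the integral of a periodic function's derivative over a full period vanishes. Nothing depends on \(t\), \(a\), \(V\) or \(c\), so the estimates are uniform along the continuity path.
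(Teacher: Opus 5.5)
Your proof is correct and is essentially the paper's argument: you restrict \(D^2\psi\ge-A\) to coordinate lines and use that \(\partial_i\psi\) has zero mean over a period, then combine the zero average with the diameter of the unit cube for the \(C^0\) bound. The differences are cosmetic: integrating the linear lower bound forward and backward gives \(|\partial_i\psi|\le\lambda_{\max}(A)/2\) where the paper bounds \(\operatorname{osc} f'\le A_{ii}\), and the unused remarks about the minimum point \(x_0\) and the quadratic lower bound on \([0,R]\) can simply be deleted.
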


\begin{proof}
Since
\[
        A+D^2\psi>0,
\]
we have
\[
        D^2\psi\ge-A
\]
as quadratic forms.  In particular,
\[
        \psi_{ii}\ge-A_{ii}.
\]

Fix all variables except \(x_i\), and let
\[
        f(s)
        =
        \psi(x_1,\ldots,x_{i-1},s,x_{i+1},\ldots,x_n).
\]
Then \(f\) is \(1\)-periodic and satisfies
\[
        f''\ge-A_{ii}.
\]
We claim that
\[
        \|f'\|_{L^\infty([0,1])}\le A_{ii}.
\]
Indeed, for \(0\le s<t\le1\),
\[
        f'(t)-f'(s)\ge-A_{ii}(t-s).
\]
Applying the same inequality to the periodically extended function on the
interval \([t,s+1]\) gives
\[
        f'(s)-f'(t)\ge-A_{ii}(s+1-t).
\]
Hence
\[
        \operatorname{osc}_{[0,1]}f'\le A_{ii}.
\]
Since
\[
        \int_0^1 f'(s)\,\mathrm{d}s=0,
\]
the asserted bound follows.

Applying this argument in each coordinate direction gives the explicit bound
\[
        \|D\psi\|_{L^\infty(\T^n)}
        \le
        \left(\sum_{i=1}^n A_{ii}^2\right)^{1/2}
        \le
        \sqrt n\,\lambda_{\max}(A),
\]
and hence \eqref{eq:C1-cell}.  Since \(\psi\) has zero average, its minimum is
nonpositive and its maximum is nonnegative.  The gradient bound therefore
implies
\[
        \operatorname{osc}_{\T^n}\psi
        \le
        \operatorname{diam}(\T^n)
        \|D\psi\|_{L^\infty(\T^n)},
\]
which gives \eqref{eq:C0-cell}.
\end{proof}

We also use the following determinant identity.

\begin{lem}
\label{lem:periodic-determinant-identity}
For every smooth periodic \(\psi\),
\[
        \int_{\T^n}
        \det(A+D^2\psi)\,\mathrm{d}x
        =
        \det A.
\]
\end{lem}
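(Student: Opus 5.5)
The plan is to expand the determinant and show that every term other than \(\det A\) is a divergence of a periodic vector field, so it integrates to zero over \(\T^n\). Use the expansion of the determinant of a sum,
\[
        \det(A+D^2\psi)
        =
        \sum_{k=0}^{n}\ \sum_{|I|=|J|=k}
        \pm\,\det\bigl(A_{I^cJ^c}\bigr)\,\det\bigl((D^2\psi)_{IJ}\bigr),
\]
where \(I,J\subset\{1,\dots,n\}\) run over index sets of size \(k\) and the signs are the usual Laplace-expansion signs. The \(k=0\) term is \(\det A\). Since \(A\) is constant, it then suffices to show that for \(k\ge1\) each minor \(\det((D^2\psi)_{IJ})\) has zero integral over \(\T^n\).

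The key step is the null-Lagrangian structure of Hessian minors. Writing \(I=\{i_1,\dots,i_k\}\) and \(J=\{j_1,\dots,j_k\}\), the minor \(\det(\psi_{i_\alpha j_\beta})\) is the Jacobian-type determinant of the map \(x\mapsto(\psi_{i_1},\dots,\psi_{i_k})\) in the variables \(x_{j_1},\dots,x_{j_k}\). I would expand along the first row and use the cofactor divergence identity, which is the same symmetry of third derivatives that makes \(\partial_j\cof(D^2\psi)_{ij}=0\). This writes the minor as
\[
        \sum_{\beta}\partial_{j_\beta}\bigl(\psi_{i_1}\,C_{\beta}\bigr),
\]
where \(C_\beta\) is the signed \((k-1)\)-minor of the remaining rows, and hence is a polynomial in second derivatives of \(\psi\). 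The point to check is that the terms in which the derivative falls on \(C_\beta\) cancel. They do, because \(\sum_\beta \partial_{j_\beta}C_\beta\) is an alternating sum of determinants with two rows that become equal by commutation of partial derivatives, so it vanishes.

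Since \(\psi\), and therefore \(\psi_{i_1}C_\beta\), is smooth and \(\mathbb Z^n\)-periodic, the divergence theorem on \(\T^n\) (equivalently, integrating each \(\partial_j\) of a periodic function over a period cell) gives zero. Summing over all \(k\ge1\) leaves \(\int_{\T^n}\det A=\det A\), using that \(\T^n\) has volume one.

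The main obstacle is only bookkeeping of signs in the cancellation step. An alternative that avoids it is a deformation argument. Set \(F(s)=\int_{\T^n}\det(A+sD^2\psi)\). Then
\[
        F'(s)=\int_{\T^n}\cof(A+sD^2\psi)_{ij}\,\psi_{ij}
        =-\int_{\T^n}\partial_j\cof(A+sD^2\psi)_{ij}\,\psi_i
        =0
\]
by the classical divergence-free property of the cofactor matrix of a Hessian plus a constant matrix. Hence \(F(1)=F(0)=\det A\). I would present this second version, since it is shorter.
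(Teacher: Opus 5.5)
Your proposal is correct, and the deformation argument you say you would present is essentially the paper's proof: differentiate \(s\mapsto\int_{\T^n}\det(A+sD^2\psi)\,\mathrm{d}x\) using Jacobi's formula, then show the derivative vanishes by integrating by parts, using periodicity and the fact that the cofactor matrix of the Hessian \(D^2\bigl(\tfrac12x^\top Ax+s\psi\bigr)\) is divergence-free. Your first route, via the Laplace expansion into Hessian minors, is also valid, since each minor is a divergence by the Piola identity; it is a longer, term-by-term version of the same divergence structure.
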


\begin{proof}
For \(s\in[0,1]\), define
\[
        I(s)
        =
        \int_{\T^n}
        \det(A+sD^2\psi)\,\mathrm{d}x.
\]
Since
\[
        A+sD^2\psi
        =
        D^2\left(
        \frac12 x^\top A x+s\psi
        \right),
\]
its cofactor matrix is divergence-free:
\[
        \partial_i
        \cof(A+sD^2\psi)^{ij}
        =0.
\]
Therefore
\[
\begin{aligned}
        I'(s)
        &=
        \int_{\T^n}
        \cof(A+sD^2\psi)^{ij}\psi_{ij}
        \,\mathrm{d}x \\
        &=
        \int_{\T^n}
        \partial_i
        \left(
        \cof(A+sD^2\psi)^{ij}\psi_j
        \right)
        \,\mathrm{d}x \\
        &=0.
\end{aligned}
\]
Thus \(I(1)=I(0)=\det A\).
\end{proof}

Combining \eqref{eq:cell-path} with
Lemma~\ref{lem:periodic-determinant-identity}, we obtain the exact identity
\[
        c
        =
        \log\left(
        \frac{
        \displaystyle
        \int_{\T^n}
        e^{-t a\cdot D\psi+tV}\,\mathrm{d}x
        }{\det A}
        \right).
\]
For the \textit{a priori} estimates we use instead the pointwise bounds
\eqref{eq:cell-c-sharp-bounds}.  Together with
Lemma~\ref{lem:periodic-semiconvexity}, they give
\[
        |c|\le C(A,V).
\]
The equation
\[
        \det M
        =
        e^{-t a\cdot D\psi+tV-c}
\]
then yields
\begin{equation}\label{eq:det-bounds-cell}
        0<\lambda
        \le
        \det M
        \le
        \Lambda<\infty,
\end{equation}
where \(\lambda,\Lambda\) depend only on \(A,a,V\), and \(n\).

\begin{prop}[Uniform second-derivative estimate]
\label{prop:cell-C2-estimate}
There exists a constant
\(C=C(A,a,n,\|V\|_{C^2(\T^n)})\) such that every solution of
\eqref{eq:cell-path} satisfies
\begin{equation}\label{eq:C2-cell}
        C^{-1}I
        \le
        A+D^2\psi
        \le
        CI
        \quad\text{on }\T^n.
\end{equation}
\end{prop}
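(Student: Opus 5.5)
Since the determinant bounds \eqref{eq:det-bounds-cell} give $\lambda\le\det M\le\Lambda$, a two-sided bound on $M=A+D^2\psi$ follows once we bound the largest eigenvalue of $M$ from above: if $\lambda_{\max}(M)\le C$, then every eigenvalue satisfies $\lambda_i\ge\det M/\lambda_{\max}^{n-1}\ge\lambda C^{1-n}$. So the plan is to prove $\lambda_{\max}(M)\le C$ by a maximum-principle argument. It is natural to run it through the full linearized operator of \eqref{eq:cell-path},
\[
        \mathcal L f=M^{ij}f_{ij}+t\,a\cdot Df,\qquad (M^{ij})=M^{-1},
\]
as the introduction announces.

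\emph{Step 1: a differential inequality for a second derivative.} Write \eqref{eq:cell-path} as $\log\det M=-t\,a\cdot D\psi+tV-c$. Differentiate twice in a fixed unit direction $e$, using $M_{ij,e}=\psi_{eij}$ because $A$ is constant. This gives
\[
        M^{ij}\psi_{eeij}-M^{ip}M^{jq}\psi_{eij}\psi_{epq}
        =-t\,a_k\psi_{kee}+tV_{ee}.
\]
Hence $\mathcal L(\psi_{ee})=M^{ip}M^{jq}\psi_{eij}\psi_{epq}+tV_{ee}\ge -\|V\|_{C^2}$. This is the same cancellation as in \eqref{eq:null-LP}: the drift term in $\mathcal L$ exactly absorbs the third-order term $-t\,a_k\psi_{kee}$ coming from the right-hand side. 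Since $\psi$ is periodic, there is no boundary to handle.

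\emph{Step 2: the test function.} The largest eigenvalue is not smooth, so I would work with $G(x,e)=M_{ee}=A_{ee}+\psi_{ee}$ maximized over $x\in\T^n$ and $|e|=1$. At a maximum point $(x_0,e_0)$, $e_0$ is an eigenvector of $M(x_0)$ for $\lambda_{\max}$. Freeze $e=e_0$; then $x\mapsto\psi_{e_0e_0}$ has a global maximum at $x_0$, so $D\psi_{e_0e_0}=0$ and $D^2\psi_{e_0e_0}\le0$ there. Consequently $\mathcal L(\psi_{e_0e_0})(x_0)\le0$. Step 1 then yields
\[
        M^{ip}M^{jq}\psi_{e_0ij}\psi_{e_0pq}\le \|V\|_{C^2}
\]
at $x_0$, but this alone does not bound $\lambda_{\max}$.

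\emph{Step 3: the main obstacle.} I therefore expect to need an auxiliary term, a concave function of $\psi_{ee}$ plus a lower-order term. I would use
\[
        \Theta=\log\lambda_{\max}(M)+\tfrac{\mu}{2}|x|^2\text{-type correction,}
\]
but $|x|^2$ is not periodic. The periodic substitute I would try first is $\Theta=\log M_{ee}-\mu\,\psi$, i.e.\ the classical $\log\lambda_{\max}+K(\text{linear correction})$ with $\psi$ playing the role of the potential. With $\log$, the concavity term $-M^{ij}(\psi_{eei}\psi_{eej})/M_{ee}^2$ is dominated by the positive cubic term $M^{ip}M^{jq}\psi_{eij}\psi_{epq}/M_{ee}$: in an eigenframe with $e=e_1$, this includes $\sum_{p}M^{11}M^{pp}\psi_{11p}^2$. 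It remains to compute $\mathcal L\psi$:
\[
        \mathcal L\psi=M^{ij}(M_{ij}-A_{ij})+t\,a\cdot D\psi=n-\tr(M^{-1}A)+t\,a\cdot D\psi.
\]
Lemma~\ref{lem:periodic-semiconvexity} bounds $|D\psi|$. Thus at the maximum,
\[
        0\ge \mathcal L\Theta\ge -\frac{C}{M_{11}}-\mu\bigl(n+C\bigr)+\mu\tr(M^{-1}A).
\]
Since $\tr(M^{-1}A)\ge\lambda_{\min}(A)\sum_i\lambda_i^{-1}$, and $\det M\ge\lambda$ with large $\lambda_{\max}$ forces $\sum_i\lambda_i^{-1}\ge c\,\lambda_{\max}^{1/(n-1)}$, choosing $\mu=1$ gives $\lambda_{\max}(x_0)\le C$. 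Because $\psi$ is bounded by \eqref{eq:C0-cell}, this transfers to a bound on $\lambda_{\max}$ everywhere. Combined with the determinant bounds, this gives \eqref{eq:C2-cell}. The delicate point is justifying the eigenvalue reduction at the maximum, handled by the frozen-direction trick of Step 2, and checking the sign bookkeeping in the concavity-versus-third-order comparison.
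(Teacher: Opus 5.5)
Your Step 3 is the paper's proof. The paper maximizes exactly $\log M_{\xi\xi}-B\psi$ over $\T^n\times\mathbb S^{n-1}$, uses the same log-concavity/third-order comparison and drift cancellation, and the same identity $\mathcal L_t\psi=n-\tr(M^{-1}A)+t\,a\cdot D\psi$. The only differences are bookkeeping. The paper absorbs $tV_{11}/M_{11}$ via $\tr(M^{-1}A)\ge\lambda_{\min}(A)/M_{11}$, choosing $B=1+K_V/\lambda_{\min}(A)$; it then reads off $M(x_0)\ge C^{-1}I$ and bounds $M_{11}(x_0)$ by the upper determinant bound.

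Two small corrections. First, it is $\det M\le\Lambda$, not $\det M\ge\lambda$, that forces $\sum_i\lambda_i^{-1}\ge(\lambda_{\max}/\Lambda)^{1/(n-1)}$ (both bounds are available in \eqref{eq:det-bounds-cell}). Second, with $\mu=1$ the term $-C/M_{11}$ is only controlled once you restrict to the case $M_{11}\ge1$; the case $M_{11}<1$ is trivial.
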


\begin{proof}
Define the full linearized operator
\[
        \mathcal L_t
        =
        M^{ij}\partial_{ij}
        +t\,a\cdot D.
\]
Fix a constant \(B>0\), to be chosen below, and consider
\[
        G(x,\xi)
        =
        \log M_{\xi\xi}(x)-B\psi(x),
        \quad
        (x,\xi)\in\T^n\times \mathbb S^{n-1}.
\]
Let \((x_0,\xi_0)\) be a maximum point of \(G\).  After a constant orthogonal
change of coordinates, we may assume that at \(x_0\),
\[
        \xi_0=e_1
\]
and
\[
        M(x_0)
        =
        \operatorname{diag}(M_1,\ldots,M_n),
        \quad
        M_1=M_{11}=\lambda_{\max}(M(x_0)).
\]
All computations below are performed at \(x_0\).

Differentiating \eqref{eq:cell-path} twice in the \(x_1\)-direction gives
\[
        M^{ij}M_{ij,11}
        -
        M^{ip}M^{jq}M_{ij,1}M_{pq,1}
        +
        t a_k M_{11,k}
        =
        t V_{11}.
\]
Since
\[
        M_{ij,11}=M_{11,ij},
\]
we obtain
\[
\begin{aligned}
        M^{ij}(\log M_{11})_{ij}
        &=
        \frac{M^{ip}M^{jq}M_{ij,1}M_{pq,1}}{M_{11}} \\
        &\quad
        -
        \frac{M^{ij}M_{11,i}M_{11,j}}{M_{11}^2}
        -
        \frac{t a_k M_{11,k}}{M_{11}}
        +
        \frac{t V_{11}}{M_{11}}.
\end{aligned}
\]
At the diagonal point, the difference of the two third-order quadratic
expressions is nonnegative.  The full symmetry of the third derivatives gives
\[
        M_{11,i}=M_{1i,1}.
\]
Hence
\[
\begin{aligned}
&\frac{M^{ip}M^{jq}M_{ij,1}M_{pq,1}}{M_{11}}
-
\frac{M^{ij}M_{11,i}M_{11,j}}{M_{11}^2} \\
&\qquad =
\sum_{i>1}
\frac{M_{1i,1}^2}{M_1^2M_i}
+
\sum_{i,j>1}
\frac{M_{ij,1}^2}{M_iM_jM_1}
\ge0.
\end{aligned}
\]
Therefore
\[
        M^{ij}(\log M_{11})_{ij}
        \ge
        -\frac{t a_k M_{11,k}}{M_{11}}
        +
        \frac{t V_{11}}{M_{11}}.
\]
The drift part of \(\mathcal L_t\) cancels the third-order drift term:
\[
        t\,a\cdot D\log M_{11}
        =
        \frac{t a_k M_{11,k}}{M_{11}}.
\]
Thus
\begin{equation}\label{eq:cell-log-eigenvalue}
        \mathcal L_t\log M_{11}
        \ge
        \frac{t V_{11}}{M_{11}}.
\end{equation}

On the other hand,
\[
\begin{aligned}
        \mathcal L_t(-B\psi)
        &=
        -B M^{ij}\psi_{ij}
        -Bt\,a\cdot D\psi \\
        &=
        -Bn
        +B\tr(M^{-1}A)
        -Bt\,a\cdot D\psi,
\end{aligned}
\]
because
\[
        M^{ij}\psi_{ij}
        =
        M^{ij}(M_{ij}-A_{ij})
        =
        n-\tr(M^{-1}A).
\]
Since \(G(\,\cdot\,,e_1)\) has a maximum at \(x_0\),
\[
        \mathcal L_t G(x_0,e_1)\le0.
\]
Combining this with \eqref{eq:cell-log-eigenvalue}, we obtain
\begin{equation}\label{eq:cell-C2-core}
        0
        \ge
        B\tr(M^{-1}A)
        -Bn
        -Bt\,a\cdot D\psi
        +
        \frac{t V_{11}}{M_{11}}.
\end{equation}

Let
\[
        \alpha=\lambda_{\min}(A)>0,
        \qquad
        K_V
        =
        \sup_{x\in\T^n}\|D^2V(x)\|_{\mathrm{op}}.
\]
Then
\[
        \tr(M^{-1}A)
        \ge
        \alpha\tr(M^{-1})
        \ge
        \frac{\alpha}{M_{11}}.
\]
Hence
\[
        \frac{t V_{11}}{M_{11}}
        \ge
        -
        \frac{K_V}{\alpha}
        \tr(M^{-1}A).
\]
Using the \(C^1\) bound from
Lemma~\ref{lem:periodic-semiconvexity}, inequality
\eqref{eq:cell-C2-core} gives
\[
        0
        \ge
        \left(
        B-\frac{K_V}{\alpha}
        \right)
        \tr(M^{-1}A)
        -
        B\left(
        n+|a|\|D\psi\|_{L^\infty}
        \right).
\]
Choose
\[
        B
        =
        1+\frac{K_V}{\alpha}.
\]
Then
\[
        \tr(M(x_0)^{-1}A)\le C.
\]
Since \(A^{1/2}M(x_0)^{-1}A^{1/2}\) is positive definite, its operator norm
is bounded by its trace.  Hence
\[
        A^{1/2}M(x_0)^{-1}A^{1/2}\le CI,
\]
and therefore
\[
        M(x_0)\ge C^{-1}A\ge C^{-1}I.
\]
Combining this lower eigenvalue bound with the upper determinant bound in
\eqref{eq:det-bounds-cell}, we obtain
\[
        M_{11}(x_0)\le C.
\]

Since \(G\) attains its maximum at \((x_0,e_1)\), for every
\((x,\xi)\in\T^n\times \mathbb S^{n-1}\),
\[
        \log M_{\xi\xi}(x)-B\psi(x)
        \le
        \log M_{11}(x_0)-B\psi(x_0).
\]
The \(C^0\) estimate for \(\psi\) therefore gives
\[
        M_{\xi\xi}(x)\le C.
\]
Thus
\[
        M\le CI
        \quad\text{on }\T^n.
\]
Finally, the lower determinant bound in
\eqref{eq:det-bounds-cell} and the upper eigenvalue bound imply
\[
        M\ge C^{-1}I.
\]
This proves \eqref{eq:C2-cell}.
\end{proof}

Higher-order estimates follow from standard regularity theory.

\begin{prop}
\label{prop:cell-higher-estimates}
For every integer \(k\ge0\), there exists a constant
\[
        C_k
        =
        C_k\bigl(A,a,n,\|V\|_{C^{k+2}(\T^n)}\bigr)
\]
such that every zero-average solution of \eqref{eq:cell-path} satisfies
\begin{equation}\label{eq:cell-higher-estimates}
        \|\psi\|_{C^k(\T^n)}
        +|c|
        \le C_k.
\end{equation}
\end{prop}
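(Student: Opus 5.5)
The plan is to combine the estimates already obtained along the continuity path and then bootstrap. By Lemma~\ref{lem:periodic-semiconvexity} we already control \(\|\psi\|_{C^1}\) in terms of \(A\) and \(n\). By \eqref{eq:cell-c-sharp-bounds}, \(|c|\) is controlled by \(\|V\|_{L^\infty}\) and \(A\). Proposition~\ref{prop:cell-C2-estimate} gives \(C^{-1}I\le M\le CI\). This last bound yields a \(C^{1,1}\) bound on \(\psi\), hence the cases \(k=0,1,2\) of \eqref{eq:cell-higher-estimates}, with \(\|V\|_{C^2}\) as the only dependence on \(V\).

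The key step is a \(C^{2,\alpha}\) estimate from the Evans--Krylov theorem. Lift \(\psi\) periodically to \(\R^n\) and write the equation as
\[
        F(D^2\psi,x):=\log\det(A+D^2\psi)=tV(x)-c-t\,a\cdot D\psi(x)=:h(x).
\]
By Proposition~\ref{prop:cell-C2-estimate}, the operator \(\log\det(A+\cdot)\) is concave and uniformly elliptic on the relevant range of Hessians, with ellipticity constants depending only on the \(C^2\) bound. The right-hand side \(h\) is Lipschitz with norm controlled by \(\|V\|_{C^1}\), \(|a|\) and the already established \(C^2\) bound on \(\psi\).

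We then apply the Evans--Krylov estimate for concave equations with Hölder (here Lipschitz) dependence on \(x\), in the form of Caffarelli's or Safonov's version, on unit balls. This gives \(\|\psi\|_{C^{2,\alpha}(B_1)}\le C\) with \(\alpha=\alpha(n,C)\). By periodicity the bound is global on \(\T^n\). The only care needed is to check that the \(x\)-dependence through \(D\psi\) is admissible. It is, since \(D\psi\) is already Lipschitz by the \(C^2\) bound.

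The remaining orders follow by a Schauder bootstrap. Differentiating the equation in a direction \(e_k\) shows that \(v=\psi_k\) solves the linear equation
\[
        M^{ij}v_{ij}+t\,a\cdot Dv=tV_k.
\]
Its coefficients \(M^{ij}\) lie in \(C^{\alpha}\) with bounds, and the operator is uniformly elliptic by \eqref{eq:C2-cell}. Interior Schauder estimates on balls, together with periodicity to make them global, give \(\|v\|_{C^{2,\alpha}}\le C\,(\|v\|_{L^\infty}+\|V\|_{C^{1,\alpha}})\). Hence \(\psi\in C^{3,\alpha}\). Inductively, if \(\psi\in C^{m,\alpha}\), then \(M^{ij}\in C^{m-2,\alpha}\), and Schauder yields \(\psi\in C^{m+1,\alpha}\), with each step using one more derivative of \(V\).

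Tracking the dependence gives \(\|\psi\|_{C^k}\le C_k(A,a,n,\|V\|_{C^{k+2}})\), which is compatible with (indeed generous relative to) the stated dependence. All constants are independent of \(t\in[0,1]\) because \(t\) enters only as a bounded factor. I expect the main obstacle to be the precise invocation of Evans--Krylov with gradient-dependent right-hand side. The rest is routine.
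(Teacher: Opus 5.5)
Your proposal is correct and follows the paper's proof: uniform ellipticity from Proposition~\ref{prop:cell-C2-estimate}, then Evans--Krylov for a \(C^{2,\alpha}\) bound using the Lipschitz right-hand side \(-t\,a\cdot D\psi+tV-c\), then Schauder estimates and induction on the differentiated equation \(M^{ij}(\psi_k)_{ij}+t\,a\cdot D\psi_k=tV_k\). You track the dependence on \(V\) and the independence of \(t\) more explicitly than the paper does, but the argument is the same.
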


\begin{proof}
Proposition~\ref{prop:cell-C2-estimate} gives uniform ellipticity:
\[
        C^{-1}I
        \le
        A+D^2\psi
        \le
        CI.
\]
Since \(D^2\psi\) is uniformly bounded, \(D\psi\) is uniformly Lipschitz.
Hence the right-hand side of
\[
        \log\det(A+D^2\psi)
        =
        -t a\cdot D\psi+tV-c
\]
is uniformly bounded in \(C^\alpha(\T^n)\) for every fixed
\(\alpha\in(0,1)\).  The Evans--Krylov estimate for concave uniformly
elliptic equations \cite{Evans82,Krylov83} therefore gives
\[
        \|\psi\|_{C^{2,\alpha}(\T^n)}
        \le C.
\]
Differentiating the equation produces uniformly elliptic linear equations for
the derivatives of \(\psi\).  Standard Schauder estimates and induction yield
\eqref{eq:cell-higher-estimates}.
\end{proof}

\subsection{The continuity method}

\begin{proof}[Proof of Theorem~\ref{thm:cell}]
Let \(\mathcal S\subset[0,1]\) be the set of parameters \(t\) for which there
exists a smooth pair \((\psi,c)\) solving
\eqref{eq:cell-path}, with \(A+D^2\psi>0\)
and
\[
        \int_{\T^n}\psi=0.
\]

At \(t=0\), the pair
\[
        \psi=0,
        \quad
        c=-\log\det A
\]
solves \eqref{eq:cell-path}.  Hence
\[
        0\in\mathcal S.
\]

To prove openness, fix \(t_0\in\mathcal S\), and let
\((\psi,c)\) be the corresponding solution.  Set
\[
        M=A+D^2\psi.
\]
The linearization of the left-hand side of
\eqref{eq:cell-path} in the variables \((\psi,c)\) is
\begin{equation}\label{eq:cell-linearization}
        (\eta,\kappa)
        \longmapsto
        L\eta+\kappa,
        \quad
        L\eta
        =
        M^{ij}\eta_{ij}
        +t_0a\cdot D\eta.
\end{equation}
We view this as a map
\[
        C^{2,\alpha}_0(\T^n)\times\R
        \longrightarrow
        C^\alpha(\T^n),
\]
where
\[
        C^{2,\alpha}_0(\T^n)
        =
        \left\{
        \eta\in C^{2,\alpha}(\T^n):
        \int_{\T^n}\eta=0
        \right\}.
\]

The strong maximum principle gives
\[
        \ker L=\{\text{constants}\}.
\]
As an elliptic operator on the compact manifold \(\T^n\), \(L\) is Fredholm
as a map
\[
        C^{2,\alpha}(\T^n)
        \longrightarrow
        C^\alpha(\T^n).
\]
Its index is zero.  Indeed, the lower-order drift can first be removed through
the elliptic homotopy
\[
        L_s\eta=M^{ij}\eta_{ij}+s\,t_0a\cdot D\eta,
        \qquad 0\le s\le1,
\]
and the positive definite principal coefficients can then be deformed through
elliptic operators to those of the flat Laplacian.  The Fredholm index is
constant along these homotopies, while the Laplacian on \(\T^n\) has index
zero.  It follows that
\[
        \operatorname{codim}\operatorname{Ran}L=1.
\]
Also,
\[
        1\notin\operatorname{Ran}L.
\]
Indeed, if \(L\eta=1\), then at a maximum point of \(\eta\), \(L\eta\le0\),
which is impossible.  Hence
\[
        C^\alpha(\T^n)
        =
        \operatorname{Ran}L
        \oplus
        \operatorname{span}\{1\}.
\]
Adding a constant to \(\eta\) does not change \(L\eta\), so restricting the
domain of \(L\) to the zero-average subspace does not change its range.
Hence the map in \eqref{eq:cell-linearization} is surjective.

It is also injective.  If
\[
        L\eta+\kappa=0
\]
and \(\eta\) has zero average, then \(\kappa>0\) is impossible by evaluating
at a minimum point of \(\eta\), while \(\kappa<0\) is impossible by
evaluating at a maximum point.  Thus \(\kappa=0\), and the strong maximum
principle gives that \(\eta\) is constant.  Its zero average then implies
\(\eta=0\).

Thus
\[
        (\eta,\kappa)\longmapsto L\eta+\kappa
\]
is an isomorphism.  The implicit function theorem proves that
\(\mathcal S\) is open.

The \textit{a priori} estimates in
Lemmas~\ref{lem:periodic-semiconvexity} and
Proposition~\ref{prop:cell-higher-estimates} imply that
\(\mathcal S\) is closed.  Indeed, if
\[
        t_j\in\mathcal S,
        \quad
        t_j\rightarrow t_\infty,
\]
then the corresponding solutions admit uniform \(C^k\) bounds for every
\(k\).  A diagonal compactness argument yields a smooth limiting solution at
\(t_\infty\).

Thus \(\mathcal S=[0,1]\).
At \(t=1\), we obtain a smooth pair \((\psi_A,c_A)\) satisfying
\eqref{eq:cell-again}.  Lemma~\ref{lem:cell-uniqueness} gives uniqueness of
\(c_A\), and uniqueness of the zero-average solution \(\psi_A\).
\end{proof}

\section{Periodic correctors}
\label{sec:periodic-correctors-rigidity}

Throughout this section, \(A=A^\top>0\) is fixed, and
\((\psi_A,c_A)\) denotes the unique zero-average solution of the cell problem
\begin{equation}\label{eq:cell-reference-section4}
        \det(A+D^2\psi_A)
        =
        \exp\{-a\cdot D\psi_A+V-c_A\},
        \quad
        A+D^2\psi_A>0
        \quad\text{on }\T^n.
\end{equation}
We extend \(V\) and \(\psi_A\) periodically to \(\R^n\), and write
\[
        M_A=A+D^2\psi_A,
        \quad
        \Phi_A(x)=\frac12 x^\top A x+\psi_A(x).
\]
Thus
\[
        D^2\Phi_A=M_A.
\]

We study entire solutions of
\begin{equation}\label{eq:periodic-full-again}
        \det D^2u
        =
        \exp\{-a\cdot Du+b\cdot x+V(x)-c_0\}
        \quad\text{in }\R^n.
\end{equation}
\subsection{Quadratic-periodic solutions}

\begin{thm}
Let \(A=A^\top>0\), \(\ell\in\R^n\), and
\(\psi\in C^\infty(\T^n)\).  Suppose that
\[
        A+D^2\psi>0.
\]
Then
\[
        u(x)
        =
        \frac12 x^\top A x+\ell\cdot x+\psi(x)
\]
solves \eqref{eq:periodic-full-again} if and only if
\[
        b=Aa,
        \quad
        c_0+a\cdot\ell=c_A,
        \quad
        \psi=\psi_A+C
\]
for some constant \(C\in\R\).

In particular, if \(b=Aa\), then
\[
        u(x)
        =
        \frac12 x^\top A x+\ell\cdot x+\psi_A(x)+C
\]
is a solution precisely when
\(c_0+a\cdot\ell=c_A\), with \(C\in\R\) arbitrary.
\end{thm}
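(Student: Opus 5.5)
The plan is to substitute the ansatz directly into \eqref{eq:periodic-full-again} and separate the periodic part of the resulting identity from its affine part. For $u=\frac12x^\top Ax+\ell\cdot x+\psi$ we have $Du=Ax+\ell+D\psi$ and $D^2u=A+D^2\psi$. Taking logarithms, the equation becomes
\[
        \log\det(A+D^2\psi)+a\cdot D\psi-V
        =
        (b-Aa)\cdot x-a\cdot\ell-c_0,
\]
where we used $a\cdot Ax=Aa\cdot x$, valid because $A$ is symmetric. The left-hand side is a smooth $\mathbb Z^n$-periodic function, hence bounded on $\R^n$. The right-hand side is affine in $x$.

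A bounded affine function is constant, so $b=Aa$. The identity then reads
\[
        \det(A+D^2\psi)=\exp\{-a\cdot D\psi+V-(c_0+a\cdot\ell)\},
        \qquad A+D^2\psi>0,
\]
on $\T^n$. Thus $(\psi,c_0+a\cdot\ell)$ solves the cell problem \eqref{eq:cell-reference-section4}. By Lemma~\ref{lem:cell-uniqueness}, applied to this pair and to $(\psi_A,c_A)$, we get $c_0+a\cdot\ell=c_A$ and $\psi-\psi_A$ constant. Existence of $(\psi_A,c_A)$ is supplied by Theorem~\ref{thm:cell}. This proves necessity.

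For sufficiency, assume $b=Aa$, $c_0+a\cdot\ell=c_A$ and $\psi=\psi_A+C$. Then the displayed logarithmic identity reduces exactly to the cell equation for $\psi_A$, which holds, so $u$ is a solution. Strict convexity follows from $A+D^2\psi_A>0$. The ``in particular'' statement is the same equivalence with $b=Aa$ fixed, since $C$ does not enter the equation.

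None of these steps is a real obstacle. The only points needing care are the use of the symmetry of $A$ when moving $a\cdot Ax$ to $Aa\cdot x$, and the fact that the uniqueness lemma is applied to the pair $(\psi,c_0+a\cdot\ell)$ with no normalization imposed on $\psi$.
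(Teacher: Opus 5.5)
Your proof is correct and follows the paper's argument: substitute the ansatz, eliminate the non-periodic term to force $b=Aa$, then apply the unnormalized cell-problem uniqueness (Lemma~\ref{lem:cell-uniqueness}) to $(\psi,c_0+a\cdot\ell)$ and $(\psi_A,c_A)$. The only cosmetic difference is how you kill $(b-Aa)\cdot x$: you observe that a bounded affine function is constant, whereas the paper compares the identity at $x$ and $x+e_i$ to obtain $e^{m_i}=1$ for $m=b-Aa$.
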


\begin{proof}
For
\[
        u(x)
        =
        \frac12 x^\top A x+\ell\cdot x+\psi(x),
\]
we have
\[
        Du=Ax+\ell+D\psi,
        \quad
        D^2u=A+D^2\psi.
\]
Since \(A\) is symmetric,
\[
\begin{aligned}
        -a\cdot Du+b\cdot x+V-c_0
        &=
        (b-Aa)\cdot x
        -a\cdot\ell
        -a\cdot D\psi
        +V-c_0.
\end{aligned}
\]
Hence the equation becomes
\begin{equation}\label{eq:periodic-ansatz-equation}
        \det(A+D^2\psi)
        =
        \exp\{
        (b-Aa)\cdot x
        -a\cdot D\psi
        +V-(c_0+a\cdot\ell)
        \}.
\end{equation}

The left-hand side of \eqref{eq:periodic-ansatz-equation} is
\(\mathbb Z^n\)-periodic.  The functions \(D\psi\) and \(V\) are also
\(\mathbb Z^n\)-periodic.  Set
\[
        m=b-Aa.
\]
If \(m\neq0\), then some coordinate \(m_i\) is nonzero.  Taking
\(k=e_i\in\mathbb Z^n\) and comparing
\eqref{eq:periodic-ansatz-equation} at \(x\) and \(x+k\) gives
\[
        e^{m\cdot k}=1.
\]
This is impossible because \(m\cdot k=m_i\neq0\).  Therefore
\[
        b=Aa.
\]

With this compatibility imposed, \eqref{eq:periodic-ansatz-equation} reduces
to
\[
        \det(A+D^2\psi)
        =
        \exp\{
        -a\cdot D\psi
        +V-(c_0+a\cdot\ell)
        \}.
\]
The uniqueness of the cell problem, without imposing the additive
normalization on \(\psi\), gives
\[
        c_0+a\cdot\ell=c_A
\]
and
\[
        \psi=\psi_A+C
\]
for some constant \(C\).  Conversely, under these conditions the full-space
equation reduces to \eqref{eq:cell-reference-section4}.  The additive
constant does not occur in
the equation.
\end{proof}

\begin{rem}
If \(a\neq0\) and \(b=Aa\), then for every \(c_0\in\R\) one can choose
\(\ell\in\R^n\) so that
\[
        a\cdot\ell=c_A-c_0.
\]
If \(a=0\), then \(b=Aa\) is equivalent to \(b=0\), and the condition on the
normalizing constant reduces to
\[
        c_0=c_A.
\]
\end{rem}

\subsection{The compatibility condition}

We use the following elementary semiconvexity lemma.

\begin{lem}
\label{lem:semiconvex-gradient-growth}
Let \(f\in C^2(\R^n)\) satisfy
\[
        D^2f\ge-\Lambda I
        \quad\text{in }\R^n
\]
for some \(\Lambda\ge0\).

\begin{enumerate}[label=\textnormal{(\roman*)}]
\item
If \(f\in L^\infty(\R^n)\), then
\begin{equation}\label{eq:bounded-semiconvex-gradient}
        \|Df\|_{L^\infty(\R^n)}
        \le
        \sqrt{2\Lambda\,
        \osc_{\R^n}f}.
\end{equation}
When \(\Lambda=0\), the conclusion is understood as \(Df\equiv0\).

\item
If
\begin{equation}\label{eq:subquadratic-remainder}
        \sup_{B_R}|f|=o(R^2)
        \quad\text{as }R\to\infty,
\end{equation}
then
\begin{equation}\label{eq:sublinear-gradient-growth}
        \sup_{B_R}|Df|=o(R)
        \quad\text{as }R\to\infty.
\end{equation}
\end{enumerate}
\end{lem}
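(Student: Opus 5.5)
Both parts will follow from a one-dimensional Taylor argument along lines. Fix \(x\in\R^n\), a unit vector \(e\), and set \(g(s)=f(x+se)\), so that \(g''\ge-\Lambda\). For \(h>0\), Taylor's formula with this lower bound on the second derivative gives
\[
        g(h)\ge g(0)+hg'(0)-\tfrac{\Lambda}{2}h^2 .
\]
Rearranging,
\[
        g'(0)\le \frac{g(h)-g(0)}{h}+\frac{\Lambda h}{2}.
\]
Applying the same inequality with \(-e\) in place of \(e\) bounds \(-g'(0)\). Choosing \(e=Df(x)/|Df(x)|\) (when \(Df(x)\neq0\)), we obtain, for every \(h>0\),
\[
        |Df(x)|\le \frac{f(x+he)-f(x)}{h}+\frac{\Lambda h}{2}.
\]

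For part (i), bound the difference quotient by \(\osc_{\R^n}f/h\). This gives
\[
        |Df(x)|\le \frac{\osc f}{h}+\frac{\Lambda h}{2}.
\]
Minimizing in \(h\) at \(h=\sqrt{2\osc f/\Lambda}\) yields exactly \(\sqrt{2\Lambda\osc f}\). If \(\Lambda=0\), let \(h\to\infty\) to get \(Df\equiv0\). If \(\osc f=0\), then \(f\) is constant and the statement is trivial.

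For part (ii), take \(x\in B_R\) and \(h\le R\), so that \(x+he\in B_{2R}\). Set \(\omega(R)=\sup_{B_{2R}}|f|\), which is \(o(R^2)\) by hypothesis. Then
\[
        |Df(x)|\le \frac{2\omega(R)}{h}+\frac{\Lambda h}{2}.
\]
Choose \(h=\min\{R,\sqrt{\omega(R)}\}\), with \(h=R\) if \(\omega(R)=0\). Since \(\omega(R)=o(R^2)\), for large \(R\) we have \(h=\sqrt{\omega(R)}\le R\), and the bound becomes
\[
        |Df(x)|\le \bigl(2+\tfrac{\Lambda}{2}\bigr)\sqrt{\omega(R)}=o(R).
\]
If instead \(\Lambda=0\), the second term vanishes and \(h=R\) gives \(|Df(x)|\le 2\omega(R)/R=o(R)\); either way the bound is uniform in \(x\in B_R\), which proves \eqref{eq:sublinear-gradient-growth}.

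There is no real obstacle here. The only care needed is choosing \(h\) so that the two terms balance while the segment stays inside a ball of radius comparable to \(R\).
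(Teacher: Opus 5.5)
Your proof is correct and follows essentially the paper's route. Both arguments use the same one-dimensional semiconvexity (Taylor) inequality along lines. In (i), your optimal step \(h=\sqrt{2\osc f/\Lambda}\) plays the role of the paper's step \(t=P/\Lambda\). In (ii), your adaptive step \(h=\sqrt{\omega(R)}\) replaces the paper's \(t=\delta R\) followed by \(\delta\downarrow0\), which gives the slightly more quantitative bound \(\sup_{B_R}|Df|\le(2+\Lambda/2)\sqrt{\sup_{B_{2R}}|f|}\) for large \(R\).

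The only cosmetic slip is taking \(h=R\) when \(\omega(R)=0\), which yields \(\Lambda R/2\) instead of \(0\); there you should let \(h\to0\). Since \(\omega\) is nondecreasing, that case matters for large \(R\) only when \(f\equiv0\) anyway.
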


\begin{proof}
Fix \(x\in\R^n\) and \(e\in \mathbb S^{n-1}\), and define
\[
        g(t)=f(x+te).
\]
Then
\[
        g''(t)\ge-\Lambda.
\]

Suppose first that \(f\) is bounded and
\[
        P=g'(0)>0.
\]
For \(0\le t\le P/\Lambda\), when \(\Lambda>0\),
\[
        g'(t)\ge P-\Lambda t.
\]
Therefore
\[
\begin{aligned}
        \osc_{\R^n}f
        &\ge
        g(P/\Lambda)-g(0) \\
        &\ge
        \int_0^{P/\Lambda}
        (P-\Lambda t)\,\mathrm{d}t
        =
        \frac{P^2}{2\Lambda}.
\end{aligned}
\]
Thus
\[
        P\le
        \sqrt{2\Lambda\,\osc_{\R^n}f}.
\]
Applying the same argument in the direction \(-e\) gives the corresponding
lower bound for \(g'(0)\).  This proves
\eqref{eq:bounded-semiconvex-gradient}.  If \(\Lambda=0\), then \(f\) is
bounded and convex, hence constant.

Assume next that \eqref{eq:subquadratic-remainder} holds.  Fix
\(\delta\in(0,1)\).  If \(x\in B_R\), \(e\in \mathbb S^{n-1}\), and
\(t=\delta R\), semiconvexity gives
\[
        f(x+te)
        \ge
        f(x)+tDf(x)\cdot e-\frac{\Lambda t^2}{2}
\]
and
\[
        f(x-te)
        \ge
        f(x)-tDf(x)\cdot e-\frac{\Lambda t^2}{2}.
\]
Since
\[
        x\pm te\in B_{(1+\delta)R},
\]
we obtain
\[
        |Df(x)\cdot e|
        \le
        \frac{
        2\sup_{B_{(1+\delta)R}}|f|
        }{\delta R}
        +
        \frac{\Lambda\delta R}{2}.
\]
Taking the supremum over \(x\in B_R\) and \(e\in \mathbb S^{n-1}\), dividing by
\(R\), and letting \(R\to\infty\), we find
\[
        \limsup_{R\to\infty}
        \frac{\sup_{B_R}|Df|}{R}
        \le
        \frac{\Lambda\delta}{2}.
\]
Since \(\delta>0\) is arbitrary, this proves
\eqref{eq:sublinear-gradient-growth}.
\end{proof}

\begin{prop}
\label{prop:asymptotic-compatibility}
Let
\[
        u(x)
        =
        \frac12 x^\top A x+\ell\cdot x+f(x),
        \quad
        A=A^\top>0,
\]
be a smooth strictly convex solution of
\eqref{eq:periodic-full-again}.  Assume that
\[
        \omega(R):=
        \sup_{B_R}|Df|
        =
        o(R)
        \quad\text{as }R\to\infty.
\]
Then
\[
        b=Aa.
\]
\end{prop}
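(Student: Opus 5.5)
Set \(m=b-Aa\) and suppose, for contradiction, that \(m\neq0\). The introduction's heuristic suggests comparing two volumes: the volume of the gradient image \(Du(B_R)\), which grows only polynomially in \(R\), and \(\int_{B_R}\det D^2u\), which grows exponentially because of the factor \(e^{m\cdot x}\).

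\emph{Step 1 (rewrite the equation).} Write \(Du=Ax+\ell+Df\), so that
\[
        -a\cdot Du+b\cdot x
        =
        m\cdot x-a\cdot\ell-a\cdot Df.
\]
Since \(V\) is bounded, on \(B_R\) we get
\[
        \det D^2u\ge c_1\exp\{m\cdot x-|a|\,\omega(R)\},
\]
with \(c_1\) depending on \(c_0\), \(a\cdot\ell\), and \(\max_{\T^n}|V|\).

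\emph{Step 2 (upper bound on the gradient image).} Because \(u\) is smooth and strictly convex, \(Du\) is injective. The area formula therefore gives
\[
        \int_{B_R}\det D^2u\,\mathrm dx
        =
        |Du(B_R)|
        \le
        |B_{\|A\|R+|\ell|+\omega(R)}|
        \le
        C R^n
\]
for large \(R\), where we used \(\omega(R)=o(R)\).

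\emph{Step 3 (lower bound and contradiction).} Let \(e=m/|m|\). The ball \(B_{R/2}(\tfrac R2 e)\subset B_R\) contains the ball \(B_1(\tfrac{3R}{4}e)\) once \(R\ge4\). On that unit ball \(m\cdot x\ge\tfrac{3}{4}|m|R-|m|\). Hence
\[
        CR^n
        \ge
        c_2\exp\Bigl\{\tfrac34|m|R-|m|-|a|\,\omega(R)\Bigr\}.
\]
Since \(\omega(R)=o(R)\), the exponent is at least \(\tfrac12|m|R\) for large \(R\). Exponential growth then beats \(R^n\), which is a contradiction, so \(m=0\) and \(b=Aa\).

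The only delicate point is the sublinearity \(\omega(R)=o(R)\). It enters twice: to keep \(|Du(B_R)|\) polynomial, and to stop the drift term \(-a\cdot Df\) from cancelling the linear growth of \(m\cdot x\). The hypothesis supplies exactly this; in the setting of Theorem~\ref{thm:bounded-classification} it comes from Lemma~\ref{lem:semiconvex-gradient-growth}(ii), since \(D^2f\ge-A\). I expect no step to be a real obstacle. The one thing to check is that injectivity of \(Du\) justifies the area formula as an equality, and only the inequality \(\int_{B_R}\det D^2u\le|Du(B_R)|\) is needed.
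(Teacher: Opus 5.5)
Your proposal is correct and follows essentially the same route as the paper. Both arguments rewrite the exponent as $m\cdot x-a\cdot\ell-a\cdot Df+V-c_0$, use injectivity of $Du$ and the area formula to get $\int_{B_R}\det D^2u=|Du(B_R)|\le CR^n$, and contradict this with the exponential growth of the density where $m\cdot x\gtrsim R$. The only difference is cosmetic: you integrate over the unit ball $B_1(\tfrac{3R}{4}e)$, whereas the paper uses the spherical cap $\{x\in B_R: e\cdot x\ge R/2\}$; this is harmless, since only exponential growth is needed.
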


\begin{proof}
Set
\[
        m=b-Aa.
\]
Since
\[
        Du=Ax+\ell+Df,
\]
the equation becomes
\[
        \det D^2u
        =
        \exp\{
        m\cdot x
        -a\cdot\ell
        -a\cdot Df
        +V(x)-c_0
        \}.
\]

Suppose, for contradiction, that \(m\neq0\), and set
\[
        e=\frac{m}{|m|}.
\]
For \(R>1\), define the spherical cap
\[
        E_R
        =
        \left\{
        x\in B_R:
        e\cdot x\ge\frac R2
        \right\}.
\]
There exists \(c_n>0\) such that
\[
        |E_R|\ge c_n R^n.
\]
On \(E_R\),
\[
        m\cdot x\ge\frac{|m|R}{2}.
\]
Since \(V\) is bounded and
\[
        |a\cdot Df|
        \le
        |a|\omega(R)
        =
        o(R),
\]
for all sufficiently large \(R\) we have, on \(E_R\),
\[
        \log\det D^2u
        \ge
        \frac{|m|R}{4}-C.
\]
Hence
\begin{equation}\label{eq:compat-exponential-lower}
        \int_{B_R}\det D^2u\,\mathrm{d}x
        \ge
        cR^n e^{|m|R/4}.
\end{equation}

Because \(D^2u>0\), the gradient map \(Du\) is injective.  Indeed,
\[
        (Du(x)-Du(y))\cdot(x-y)>0
        \quad
        \text{for }x\neq y.
\]
Since \(Du\) is injective, the area formula gives
\begin{equation}\label{eq:gradient-area-formula}
        \int_{B_R}\det D^2u\,\mathrm{d}x
        =
        |Du(B_R)|.
\end{equation}
On the other hand,
\[
        Du(B_R)
        \subset
        AB_R+\ell+B_{\omega(R)}.
\]
Since \(\omega(R)=o(R)\), the set on the right is contained in a Euclidean
ball of radius \(CR\) for all sufficiently large \(R\).  Hence
\begin{equation}\label{eq:compat-polynomial-upper}
        |Du(B_R)|\le CR^n.
\end{equation}
Together with \eqref{eq:gradient-area-formula}, the bounds
\eqref{eq:compat-exponential-lower} and
\eqref{eq:compat-polynomial-upper} give a contradiction.  Hence \(m=0\),
that is, \(b=Aa\).
\end{proof}

\begin{cor}
\label{cor:subquadratic-compatibility}
Let
\[
        u(x)
        =
        \frac12 x^\top A x+\ell\cdot x+f(x),
        \quad
        A=A^\top>0,
\]
be a smooth strictly convex solution of
\eqref{eq:periodic-full-again}.

If
\[
        \sup_{B_R}|f|=o(R^2)
        \quad\text{as }R\to\infty,
\]
then
\[
        b=Aa.
\]
\end{cor}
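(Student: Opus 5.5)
The plan is to reduce the corollary directly to Proposition~\ref{prop:asymptotic-compatibility}. That proposition only needs sublinear growth of \(Df\), so the task is to convert the subquadratic growth of \(f\) into sublinear growth of its gradient. The tool for this is part (ii) of Lemma~\ref{lem:semiconvex-gradient-growth}, and the one structural input it requires is a uniform lower Hessian bound \(D^2f\ge-\Lambda I\) on all of \(\R^n\).

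First, establish semiconvexity of \(f\). Since \(u\) is strictly convex, \(D^2u>0\), and \(D^2f=D^2u-A\). Hence
\[
        D^2f\ge -A\ge-\lambda_{\max}(A)\,I
        \quad\text{in }\R^n .
\]
So the lemma applies with \(\Lambda=\lambda_{\max}(A)\). Smoothness of \(u\) gives \(f\in C^2(\R^n)\).

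Second, apply Lemma~\ref{lem:semiconvex-gradient-growth}(ii). The hypothesis \(\sup_{B_R}|f|=o(R^2)\) is exactly \eqref{eq:subquadratic-remainder}, so the lemma yields
\[
        \omega(R)=\sup_{B_R}|Df|=o(R).
\]
Third, note that \(u\) is a smooth strictly convex solution of \eqref{eq:periodic-full-again} with the decomposition \(u=\frac12x^\top Ax+\ell\cdot x+f\) and \(A=A^\top>0\). All hypotheses of Proposition~\ref{prop:asymptotic-compatibility} are therefore met, and it gives \(b=Aa\).

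There is no real obstacle here. The only point needing care is that the lower Hessian bound on \(f\) is global, and this holds because \(A\) is a constant matrix and convexity of \(u\) holds everywhere. That global bound is what allows the second-order Taylor comparison inside Lemma~\ref{lem:semiconvex-gradient-growth} to control \(Df\) on \(B_R\) using only the values of \(f\) on \(B_{(1+\delta)R}\).
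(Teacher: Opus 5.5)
Your proposal is correct and is exactly the paper's argument. Strict convexity gives $D^2f\ge-\lambda_{\max}(A)I$, and Lemma~\ref{lem:semiconvex-gradient-growth}(ii) then turns $\sup_{B_R}|f|=o(R^2)$ into $\sup_{B_R}|Df|=o(R)$, so Proposition~\ref{prop:asymptotic-compatibility} yields $b=Aa$.
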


\begin{proof}
The strict convexity condition gives
\[
        A+D^2f>0,
\]
and hence
\[
        D^2f\ge-\lambda_{\max}(A)I.
\]
Lemma~\ref{lem:semiconvex-gradient-growth} gives
\[
        \sup_{B_R}|Df|=o(R),
\]
so Proposition~\ref{prop:asymptotic-compatibility} applies.
\end{proof}

\subsection{Bounded correctors}

We first prove a compactness lemma for integer translates.

\begin{lem}
\label{lem:translation-compactness}
Let
\[
        h\in C^\infty(\R^n)\cap L^\infty(\R^n)
\]
satisfy
\[
        M_A+D^2h>0
\]
and
\begin{equation}\label{eq:h-delta}
        \log\det(M_A+D^2h)
        -
        \log\det M_A
        +
        a\cdot Dh
        =
        \delta
        \quad\text{in }\R^n
\end{equation}
for some constant \(\delta\in\R\).  Then the family
\[
        \left\{
        h(\,\cdot\,+m):m\in\mathbb Z^n
        \right\}
\]
is precompact in \(C^\infty_{\mathrm{loc}}(\R^n)\).
\end{lem}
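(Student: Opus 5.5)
The plan is to obtain uniform $C^k$ bounds on $h_m:=h(\cdot+m)$ on every fixed ball, with constants independent of $m\in\mathbb Z^n$, and then conclude by Arzel\`a--Ascoli and a diagonal argument. Since $M_A=A+D^2\psi_A$ is $\mathbb Z^n$-periodic and smooth, each $h_m$ satisfies the same equation \eqref{eq:h-delta} with the same $\delta$. Moreover $\|h_m\|_{L^\infty}=\|h\|_{L^\infty}$. So it suffices to bound $h$ in $C^k(B_R(x))$ uniformly in $x\in\R^n$.

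\emph{Step 1: $C^0$ and $C^1$.} The function $h$ is bounded by assumption. From $M_A+D^2h>0$ we get $D^2h\ge -M_A\ge -\Lambda I$, where $\Lambda=\sup_{\T^n}\lambda_{\max}(M_A)$, which is finite by Theorem~\ref{thm:cell}. Lemma~\ref{lem:semiconvex-gradient-growth}(i) then gives $\|Dh\|_{L^\infty}\le\sqrt{2\Lambda\osc h}$. Consequently $\log\det(M_A+D^2h)=\log\det M_A+\delta-a\cdot Dh$ is bounded above and below on $\R^n$. Writing $U=\Phi_A+h$, we have $D^2U=M_A+D^2h>0$, and $\det D^2U$ is pinched between positive constants globally.

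\emph{Step 2: $C^2$ (the main obstacle).} The difficulty is the two-sided Hessian bound on unit balls, uniform in the center. Two approaches are available.

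The first option is Pogorelov's interior estimate on sections. Fix $x_0$ and subtract the tangent plane, $v=U-U(x_0)-DU(x_0)\cdot(x-x_0)$. Then $v=\tfrac12(x-x_0)^\top A(x-x_0)+O(|x-x_0|)$ plus bounded terms, using the bounds on $h$, $Dh$, $\psi_A$, $D\psi_A$. Hence the section $\{v<t\}$ for a fixed large $t$ contains $B_1(x_0)$ and is contained in $B_{C}(x_0)$, with constants independent of $x_0$. On this section $\log\det D^2v=-a\cdot Dv+g(x)$, where $g$ is smooth with uniform bounds; the drift depends on $Dv$, and $Dv$ is uniformly bounded there. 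I would run the Pogorelov computation with the test function $(t-v)\log\lambda_{\max}(D^2v)+\tfrac{\beta}{2}|Dv|^2$. As in Proposition~\ref{prop:cell-C2-estimate}, the drift part of the full linearized operator $g^{ij}\partial_{ij}+a\cdot D$ cancels the third-order drift term $a_k v_{11k}/v_{11}$, so the estimate closes exactly as in the undrifted case. This gives $D^2U\le C$ on $B_1(x_0)$, and the lower bound on $\det$ then gives $D^2U\ge C^{-1}$.

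The alternative is Caffarelli's interior $C^{2,\alpha}$ theory. Since the density is bounded between positive constants and the sections are uniformly normalized, $U$ is strictly convex with uniform modulus and $C^{1,\gamma}$. Then $Dh$ is H\"older, so the right-hand side is uniformly $C^\gamma$, and Caffarelli's $C^{2,\gamma}$ estimate applies. I would present the Pogorelov route, because it only reuses the cancellation already exhibited in the paper.

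\emph{Step 3: higher derivatives and conclusion.} Once $C^{-1}I\le D^2U\le CI$ uniformly, the equation $\log\det(M_A+D^2h)=\log\det M_A+\delta-a\cdot Dh$ is uniformly elliptic and concave in $D^2h$, with a right-hand side that is Lipschitz uniformly. The Evans--Krylov estimate gives uniform $C^{2,\alpha}$ bounds on unit balls. Differentiating the equation and applying Schauder estimates inductively, exactly as in Proposition~\ref{prop:cell-higher-estimates}, gives uniform $C^k(B_1(x_0))$ bounds for all $k$. By translation invariance these bounds hold for all $h_m$ on every ball $B_R$. Arzel\`a--Ascoli together with a diagonal argument over $k$ and $R$ then yields precompactness in $C^\infty_{\mathrm{loc}}(\R^n)$.
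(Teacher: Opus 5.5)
Your proposal is correct. Its skeleton matches the paper's: the $C^1$ bound from Lemma~\ref{lem:semiconvex-gradient-growth}, two-sided determinant bounds, sections of $U=\Phi_A+h(\cdot+m)$ with inner and outer balls independent of the center and of $m$, then higher regularity and Arzel\`a--Ascoli.

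The genuine difference is the $C^2$ step. The paper takes the route you list as the alternative:
\begin{itemize}
\item it John-normalizes the sections;
\item it applies Caffarelli's interior $C^{1,\gamma}$ estimate for pinched densities;
\item it notes that this makes $e^{\delta-a\cdot Dh_m}\det M_A$ uniformly $C^\gamma$, and invokes Caffarelli's $C^{2,\alpha}$ estimate, so only Schauder theory on the differentiated equation remains.
\end{itemize}

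Your preferred Pogorelov route also works, and the cancellation you anticipate is complete. Write $\log\det D^2v=-a\cdot Dv+g$, where $Dg=D\log\det M_A+M_Aa$ and $D^2g$ are bounded independently of $x_0$. The operator $\mathcal L=v^{ij}\partial_{ij}+a\cdot D$ then satisfies, at a point where $D^2v$ is diagonal,
\begin{itemize}
\item $\mathcal L\log v_{11}\ge g_{11}/v_{11}+\sum_{j\ge2}v^{jj}v_{11j}^2/v_{11}^2$,
\item $\mathcal Lv=n+a\cdot Dv$,
\item $\mathcal L(\frac12|Dv|^2)=\Delta v+Dv\cdot Dg$.
\end{itemize}
So the drift drops out of every term. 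The classical computation closes with constants depending only on $n$, $|a|$, $t$, $\sup_S|Dv|$, $\sup|Dg|$ and $\sup|D^2g|$.

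On the test function: the customary choice is $\log(t-v)+\log\lambda_{\max}+\frac\beta2|Dv|^2$, which gives $(t-v)\lambda_{\max}\le C$. Your literal $(t-v)\log\lambda_{\max}+\frac\beta2|Dv|^2$ also closes, but only yields $\lambda_{\max}\le e^{C/(t-v)}$. Either suffices once $t$ is chosen with $B_1(x_0)\subset\{v<t/2\}$, so that $t-v$ is bounded below on $B_1(x_0)$.

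What each route buys:
\begin{itemize}
\item \emph{Your route} is an elementary maximum-principle computation in the spirit of Proposition~\ref{prop:cell-C2-estimate}. It needs no John normalization, since your expansion $v=\tfrac12(x-x_0)^\top A(x-x_0)+O(1+|x-x_0|)$ already makes the sections uniformly round. However, it uses the $C^4$ smoothness of $h$ and $C^{1,1}$ control of the density as a function of $(x,Dv)$. It also still needs Evans--Krylov to reach $C^{2,\alpha}$, as in Proposition~\ref{prop:cell-higher-estimates}.
\item \emph{The paper's route} uses heavier regularity theory. In exchange, it only needs the density to be pinched and then H\"older, and it produces $C^{2,\alpha}$ directly.
\end{itemize}
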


\begin{proof}
Since \(M_A\) is smooth, periodic, and positive definite, there are constants
\[
        0<\lambda_A\le\Lambda_A<\infty
\]
such that
\begin{equation}\label{eq:MA-uniform-bounds}
        \lambda_A I\le M_A\le\Lambda_A I
        \quad\text{on }\R^n.
\end{equation}
The positivity of \(M_A+D^2h\) gives
\[
        D^2h\ge-\Lambda_A I.
\]
Lemma~\ref{lem:semiconvex-gradient-growth} and the boundedness of \(h\) then
yield
\begin{equation}\label{eq:h-global-gradient-bound}
        \|Dh\|_{L^\infty(\R^n)}\le L
\end{equation}
for some finite constant \(L\).  Set also
\[
        H=\|h\|_{L^\infty(\R^n)}.
\]

Equation \eqref{eq:h-delta} is equivalent to
\[
        \det(M_A+D^2h)
        =
        e^{\delta-a\cdot Dh}\det M_A.
\]
Using \eqref{eq:MA-uniform-bounds} and
\eqref{eq:h-global-gradient-bound}, we obtain constants
\[
        0<\lambda\le\Lambda<\infty
\]
such that
\begin{equation}\label{eq:h-determinant-bounds}
        \lambda
        \le
        \det(M_A+D^2h)
        \le
        \Lambda
        \quad\text{in }\R^n.
\end{equation}

For \(m\in\mathbb Z^n\), set
\[
        h_m(x)=h(x+m),
        \qquad
        U_m(x)=\Phi_A(x)+h_m(x).
\]
The periodicity of \(M_A\) implies
\[
        D^2U_m=M_A+D^2h_m>0,
\]
and every \(h_m\) satisfies
\begin{equation}\label{eq:translated-h-equation}
        \log\det(M_A+D^2h_m)
        -
        \log\det M_A
        +
        a\cdot Dh_m
        =
        \delta.
\end{equation}
Equivalently,
\begin{equation}\label{eq:translated-MA-equation}
        \det D^2U_m
        =
        e^{\delta-a\cdot Dh_m}\det M_A.
\end{equation}
The determinant bounds in \eqref{eq:h-determinant-bounds} hold uniformly for
all \(m\).

Fix
\(x_0\in\R^n\), and define
\[
\begin{aligned}
        q_{m,x_0}(z)
        &:=
        U_m(x_0+z)-U_m(x_0)-DU_m(x_0)\cdot z,\\
        S_m(x_0,t)
        &:=
        \left\{
        x_0+z:q_{m,x_0}(z)<t
        \right\}.
\end{aligned}
\]
By \eqref{eq:MA-uniform-bounds},
\[
        \frac{\lambda_A}{2}|z|^2
        \le
        \Phi_A(x_0+z)-\Phi_A(x_0)-D\Phi_A(x_0)\cdot z
        \le
        \frac{\Lambda_A}{2}|z|^2.
\]
By the definitions of \(H\) and \(L\),
\[
\left|
        h_m(x_0+z)-h_m(x_0)-Dh_m(x_0)\cdot z
\right|
        \le
        2H+L|z|.
\]
Hence
\begin{equation}\label{eq:uniform-section-growth}
        \frac{\lambda_A}{2}|z|^2-L|z|-2H
        \le
        q_{m,x_0}(z)
        \le
        \frac{\Lambda_A}{2}|z|^2+L|z|+2H.
\end{equation}
All constants here are independent of \(m\) and \(x_0\).

Choose
\[
        H_0
        =
        4\bigl(2\Lambda_A+2L+2H+1\bigr).
\]
The upper bound in \eqref{eq:uniform-section-growth} gives
\begin{equation}\label{eq:section-inner-ball}
        B_2(x_0)
        \subset
        S_m\left(x_0,\frac{H_0}{4}\right).
\end{equation}
Choose \(R_0>2\) so large that
\[
        \frac{\lambda_A}{2}r^2-Lr-2H>H_0
        \quad\text{for every }r\ge R_0.
\]
The lower bound in \eqref{eq:uniform-section-growth} then gives
\begin{equation}\label{eq:section-outer-ball}
        S_m(x_0,H_0)
        \subset
        B_{R_0}(x_0).
\end{equation}
Hence the sections \(S_m(x_0,H_0)\) have uniform inner and outer ball
bounds.  Set
\[
        \Omega_{m,x_0}
        =
        S_m(x_0,H_0)-x_0.
\]
By John's lemma, there is an affine map
\[
        \mathcal A_{m,x_0}(z)
        =
        B_{m,x_0}z+y_{m,x_0}
\]
such that
\[
        B_1
        \subset
        \mathcal A_{m,x_0}(\Omega_{m,x_0})
        \subset
        B_n.
\]
The ball inclusions \eqref{eq:section-inner-ball} and
\eqref{eq:section-outer-ball} imply uniform bounds for
\(B_{m,x_0}\), \(B_{m,x_0}^{-1}\), and
\(|\det B_{m,x_0}|^{\pm1}\).  Define on the normalized domain
\[
        \widetilde\Omega_{m,x_0}
        =
        \mathcal A_{m,x_0}(\Omega_{m,x_0})
\]
the convex function
\[
        \widetilde v_{m,x_0}(y)
        =
        \frac{1}{H_0}
        q_{m,x_0}\bigl(
        \mathcal A_{m,x_0}^{-1}(y)
        \bigr)
        -1.
\]
Then \(\widetilde v_{m,x_0}=0\) on
\(\partial\widetilde\Omega_{m,x_0}\).  Since
\(q_{m,x_0}\ge0\) and \(q_{m,x_0}(0)=0\),
\[
        \min_{\widetilde\Omega_{m,x_0}}
        \widetilde v_{m,x_0}
        =
        \widetilde v_{m,x_0}
        \bigl(\mathcal A_{m,x_0}(0)\bigr)
        =
        -1.
\]
The normalized domains and potentials therefore satisfy the standard
normalization hypotheses uniformly in \(m\) and \(x_0\).  Moreover,
\[
\begin{aligned}
        \det D^2\widetilde v_{m,x_0}(y)
        & =
        H_0^{-n}|\det B_{m,x_0}|^{-2}
        \det D^2U_m\bigl(
        x_0+\mathcal A_{m,x_0}^{-1}(y)
        \bigr),
\end{aligned}
\]
so the normalized Monge--Amp\`ere densities are bounded above and below by
positive constants independent of \(m\) and \(x_0\).

By Caffarelli's interior \(C^{1,\gamma}\) estimate on normalized sections
\cite{CaffarelliLocalization,CaffarelliRegularity91}
(see also \cite[Theorem~4.20]{Figalli17}), there exist
\(\gamma\in(0,1)\) and a uniform constant \(C\) such that
\[
        [DU_m]_{C^\gamma(S_m(x_0,3H_0/4))}
        \le C.
\]
All these estimates are applied on fixed fractional sublevel sets of the
normalized potential, so the constants are uniform in \(m\) and \(x_0\).
Since
\[
        Dh_m=DU_m-D\Phi_A
\]
and \(\Phi_A\) is fixed and smooth, the right-hand side
\[
        f_m(x)
        =
        e^{\delta-a\cdot Dh_m(x)}\det M_A(x)
\]
of \eqref{eq:translated-MA-equation} is uniformly bounded in
\(C^\gamma(S_m(x_0,3H_0/4))\) and is uniformly bounded away from zero.
After pulling the equation back by \(\mathcal A_{m,x_0}\), the normalized
right-hand sides also have uniformly bounded \(C^\gamma\) norms, since the
linear parts of \(\mathcal A_{m,x_0}\) and their inverses are uniformly
bounded.

Caffarelli's interior \(C^{2,\alpha}\) estimate for H\"older densities
\cite{CaffarelliW2p,CaffarelliRegularity91}, in the formulation
\cite[Theorem~4.42]{Figalli17}, applied to the normalized equation and then
transferred back to the original coordinates, yields some
\(\alpha\in(0,\gamma]\) such that
\[
        \|D^2U_m\|_{C^\alpha(S_m(x_0,H_0/2))}
        \le C.
\]
The affine changes of variables above have uniformly bounded linear parts and
inverses, so the constants remain independent of \(m\) and \(x_0\).  Since
\[
        B_2(x_0)
        \subset
        S_m\left(x_0,\frac{H_0}{4}\right)
        \subset
        S_m\left(x_0,\frac{H_0}{2}\right),
\]
we obtain a uniform \(C^{2,\alpha}\) bound on \(B_2(x_0)\).  Together with
the determinant lower bound in \eqref{eq:h-determinant-bounds}, this makes
\[
        G_m
        :=
        D^2U_m
        =
        M_A+D^2h_m
\]
uniformly elliptic on \(B_2(x_0)\).

Differentiating \eqref{eq:translated-h-equation} in the \(x_k\)-direction gives
\begin{equation}\label{eq:translated-first-derivative}
        G_m^{ij}\partial_{ij}(h_m)_k
        +
        a\cdot D(h_m)_k
        =
        \left(
        (M_A^{-1})^{ij}-G_m^{ij}
        \right)
        \partial_k(M_A)_{ij}.
\end{equation}
The coefficients \(G_m^{ij}\) are uniformly elliptic and uniformly
\(C^\alpha\), and the right-hand side is uniformly \(C^\alpha\), because
\(M_A\) is fixed and smooth.  Interior Schauder estimates applied to
\eqref{eq:translated-first-derivative} give a uniform
\(C^{3,\alpha}\) bound for \(h_m\) on \(B_{3/2}(x_0)\).  Repeated
differentiation and induction then yield, for every integer \(r\ge0\),
\[
        \|h_m\|_{C^r(B_1(x_0))}
        \le
        C_r,
\]
where \(C_r\) depends only on \(r\), the fixed data, and finitely many
norms of the fixed smooth matrix field \(M_A\), but not on \(m\) or
\(x_0\).  Hence the family \(\{h_m\}\) is uniformly bounded in every
\(C^r\) norm on compact subsets of \(\R^n\).  The Arzel\`a--Ascoli theorem
and a diagonal argument give precompactness in
\(C^\infty_{\mathrm{loc}}(\R^n)\).
\end{proof}

\begin{thm}
\label{thm:h-liouville}
Let
\(h\in C^\infty(\R^n)\cap L^\infty(\R^n)\)
satisfy
\[
        M_A+D^2h>0
\]
and
\[
        \log\det(M_A+D^2h)
        -
        \log\det M_A
        +
        a\cdot Dh
        =
        \delta
        \quad\text{in }\R^n
\]
for some constant \(\delta\in\R\).  Then
\[
        \delta=0
\quad\text{and}\quad
        h=\textnormal{constant}.
\]
\end{thm}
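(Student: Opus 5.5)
The plan is to reduce everything to the uniqueness statement for the cell problem, Lemma~\ref{lem:cell-uniqueness}. To get there, I first show that \(h\) is \(\mathbb Z^n\)-periodic. The tools are the compactness of integer translates (Lemma~\ref{lem:translation-compactness}) and the strong comparison principle (Lemma~\ref{lem:strong-comparison}) applied with \(N=M_A\).

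\textbf{Step 1: the key observation.} Since \(M_A\) is periodic, each translate \(h(\cdot+k)\), \(k\in\mathbb Z^n\), solves the same equation with the same constant \(\delta\). So do all \(C^\infty_{\mathrm{loc}}\) limits of sequences of translates. In particular, any two such solutions \(h_1,h_2\) satisfy
\[
\log\det(M_A+D^2h_1)+a\cdot Dh_1=\log\det(M_A+D^2h_2)+a\cdot Dh_2 .
\]
This is exactly the hypothesis of Lemma~\ref{lem:strong-comparison}.

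\textbf{Step 2: \(h(\cdot+k)-h\) has nonpositive supremum.} Fix \(k\in\mathbb Z^n\) and set \(w=h(\cdot+k)-h\), which is bounded; let \(s=\sup w\).
\begin{itemize}
\item Choose points \(x_j\) with \(w(x_j)\to s\) and write \(x_j=m_j+y_j\) with \(m_j\in\mathbb Z^n\) and \(y_j\in[0,1]^n\).
\item By Lemma~\ref{lem:translation-compactness}, after passing to a subsequence, \(h(\cdot+m_j)\to h_\infty\) in \(C^\infty_{\mathrm{loc}}\) and \(y_j\to y_\infty\). Then also \(h(\cdot+m_j+k)\to h_\infty(\cdot+k)\).
\item Both \(h_\infty\) and \(h_\infty(\cdot+k)\) solve the equation with constant \(\delta\). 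Moreover \(h_\infty(\cdot+k)\le h_\infty+s\), with equality at \(y_\infty\).
\item Adding a constant preserves the equation, so Lemma~\ref{lem:strong-comparison} gives \(h_\infty(x+k)=h_\infty(x)+s\) for all \(x\).
\item Iterating, \(h_\infty(x+jk)=h_\infty(x)+js\) for all \(j\in\mathbb N\). Since \(h_\infty\) is bounded by \(\|h\|_{L^\infty}\), this forces \(s=0\).
\end{itemize}
Hence \(\sup w\le 0\).

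\textbf{Step 3: \(h\) is periodic.} The same argument at an infimizing sequence gives \(\inf w\ge0\). Therefore \(h(\cdot+k)=h\) for every \(k\in\mathbb Z^n\), so \(h\) descends to \(\T^n\).

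\textbf{Step 4: conclusion.} Using \(\det M_A=e^{-a\cdot D\psi_A+V-c_A}\), the equation for the periodic function \(\psi:=\psi_A+h\) reads
\[
\det(A+D^2\psi)=\exp\{-a\cdot D\psi+V-(c_A-\delta)\},\qquad A+D^2\psi>0\quad\text{on }\T^n .
\]
Lemma~\ref{lem:cell-uniqueness}, applied to \((\psi_A,c_A)\) and \((\psi,c_A-\delta)\), gives \(c_A-\delta=c_A\), so \(\delta=0\). It also gives that \(\psi-\psi_A=h\) is constant.

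\textbf{Alternative for \(\delta\).} One can also get \(\delta=0\) directly: take a limit of translates along a maximizing sequence of \(h\) itself. That limit attains \(\sup h\), where \(Dh=0\) and \(D^2h\le0\), so \(\delta\le0\); the minimum gives \(\delta\ge0\).

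\textbf{Main obstacle.} The only delicate point is in Step 2: the supremum of a bounded function on \(\R^n\) need not be attained. This is exactly what Lemma~\ref{lem:translation-compactness} is for. I need to make sure the limits of the two translated sequences are taken along a common subsequence and converge in \(C^2_{\mathrm{loc}}\), so that the limit equations hold classically. I also need the limits to remain strictly convex, so that the comparison lemma applies. Strict convexity follows from the uniform determinant lower bound together with the uniform \(C^2\) bounds.
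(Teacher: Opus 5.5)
Your proposal is correct and follows the paper's proof essentially step by step. You prove periodicity of $h$ using a maximizing sequence for $h(\cdot+k)-h$, compactness of integer translates, the strong comparison principle, and the iteration $h_\infty(x+jk)=h_\infty(x)+js$, then conclude from cell-problem uniqueness. The differences are cosmetic: you get the reverse inequality from an infimizing sequence rather than by replacing $k$ with $-k$, and you obtain $\delta=0$ by applying Lemma~\ref{lem:cell-uniqueness} with constants $c_A$ and $c_A-\delta$, which is the same max/min argument the paper runs separately before invoking uniqueness.
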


\begin{proof}
Fix \(k\in\mathbb Z^n\), and define
\[
        C_k
        =
        \sup_{x\in\R^n}
        \bigl(h(x+k)-h(x)\bigr).
\]
Since \(h\) is bounded, \(C_k\) is finite.  Choose a sequence
\(x_j\in\R^n\) such that
\[
        h(x_j+k)-h(x_j)\rightarrow C_k.
\]
Write
\[
        x_j=m_j+y_j,
        \quad
        m_j\in\mathbb Z^n,
        \quad
        y_j\in[0,1]^n.
\]
After passing to a subsequence,
\[
        y_j\rightarrow y_\infty
        \in[0,1]^n.
\]
By Lemma~\ref{lem:translation-compactness}, after passing to a further
subsequence,
\[
        h(\,\cdot\,+m_j)
        \rightarrow H
        \quad\text{in }C^\infty_{\mathrm{loc}}(\R^n),
\]
where \(H\) is bounded and satisfies the same equation
\[
        \log\det(M_A+D^2H)
        -
        \log\det M_A
        +
        a\cdot DH
        =
        \delta.
\]

For every \(x\in\R^n\),
\[
        h(x+m_j+k)-h(x+m_j)\le C_k.
\]
Passing to the limit gives
\begin{equation}\label{eq:limit-translation-upper}
        H(x+k)-H(x)\le C_k
        \quad\text{for all }x\in\R^n.
\end{equation}
The choice of the maximizing sequence gives equality at \(y_\infty\):
\begin{equation}\label{eq:limit-translation-touch}
        H(y_\infty+k)-H(y_\infty)=C_k.
\end{equation}

Define
\[
        \widetilde H(x)=H(x+k)-C_k.
\]
By \eqref{eq:limit-translation-upper},
\[
        \widetilde H\le H,
\]
and by \eqref{eq:limit-translation-touch}, equality holds at
\(y_\infty\).  Since \(k\in\mathbb Z^n\) and \(M_A\) is periodic, the
function \(H(\,\cdot\,+k)\) satisfies the same equation as \(H\).  Subtracting
the constant \(C_k\) also leaves the equation unchanged.  Thus both
\(\widetilde H\) and \(H\) solve
\[
        \log\det(M_A+D^2v)+a\cdot Dv
        =
        \log\det M_A+\delta.
\]
The strong comparison principle,
Lemma~\ref{lem:strong-comparison}, yields
\[
        H(x+k)-C_k\equiv H(x).
\]
Iterating,
\[
        H(x+qk)-H(x)=qC_k
        \quad
        \text{for every }q\in\mathbb N.
\]
Since \(H\) is bounded, this is possible only if
\(C_k=0\).
Therefore
\[
        h(x+k)-h(x)\le0
        \quad\text{for all }x.
\]
Applying the same argument to \(-k\) and then replacing \(x\) by \(x+k\)
gives
\[
        h(x)-h(x+k)\le0.
\]
Hence
\[
        h(x+k)=h(x)
        \quad
        \text{for all }x\in\R^n,\quad k\in\mathbb Z^n.
\]
Thus \(h\) is \(\mathbb Z^n\)-periodic.

At a maximum point \(x_+\) of \(h\),
\[
        Dh(x_+)=0,
        \quad
        D^2h(x_+)\le0.
\]
Therefore
\[
        M_A(x_+)+D^2h(x_+)
        \le
        M_A(x_+),
\]
and the equation gives
\(\delta\le0\).
At a minimum point \(x_-\),
\[
        Dh(x_-)=0,
        \quad
        D^2h(x_-)\ge0,
\]
so the same equation gives
\(\delta\ge0\).
Hence \(\delta=0\), and \(\psi_A+h\) and \(\psi_A\) solve the same
periodic cell equation:
\[
        \log\det(A+D^2v)+a\cdot Dv=V-c_A.
\]
The uniqueness statement from Section~\ref{sec:periodic-cell} implies that
their difference \(h\) is constant.
\end{proof}

We finally prove Theorem~\ref{thm:bounded-classification}.

\begin{proof}[Proof of Theorem~\ref{thm:bounded-classification}]
Write
\[
        f(x)
        =
        u(x)
        -
        \frac12 x^\top A x
        -
        \ell\cdot x.
\]
The compatibility conclusion under the subquadratic hypothesis follows from
Corollary~\ref{cor:subquadratic-compatibility}.  Assume now that
\(f\in L^\infty(\R^n)\).  Since a bounded function is subquadratic, we
already know that \(b=Aa\).  Also,
\[
        f\in C^\infty(\R^n)\cap L^\infty(\R^n),
\]
and
\[
        A+D^2f=D^2u>0.
\]

Set
\[
        h=f-\psi_A.
\]
Since \(f\) is bounded and \(\psi_A\) is periodic,
\(h\in C^\infty(\R^n)\cap L^\infty(\R^n)\).  Also,
\[
        M_A+D^2h
        =
        A+D^2f
        =
        D^2u
        >0.
\]

Using \(b=Aa\), the equation for \(u\) becomes
\[
        \log\det(A+D^2f)
        =
        -a\cdot Df
        +V-(c_0+a\cdot\ell).
\]
The cell equation gives
\[
        \log\det M_A
        =
        -a\cdot D\psi_A+V-c_A.
\]
Subtracting these identities and using
\(Dh=Df-D\psi_A\),
we obtain
\[
        \log\det(M_A+D^2h)
        -
        \log\det M_A
        +
        a\cdot Dh
        =
        c_A-(c_0+a\cdot\ell).
\]
Theorem~\ref{thm:h-liouville} therefore implies
\[
        c_A-(c_0+a\cdot\ell)=0
\quad\text{and}\quad
        h=\text{constant}.
\]
Thus
\[
        c_0+a\cdot\ell=c_A
\quad\text{and}\quad
        f=\psi_A+\text{constant}.
\]
Equivalently,
\[
        u(x)
        =
        \frac12 x^\top A x
        +
        \ell\cdot x
        +
        \psi_A(x)
        +
        C
\]
for some \(C\in\R\).
\end{proof}

\section*{Declaration of competing interest}
The author declares that there are no known competing financial interests or
personal relationships that could have appeared to influence the work reported
in this paper.

\section*{Declaration of generative AI and AI-assisted technologies in the
manuscript preparation process}
During the preparation of this work, the author used OpenAI's ChatGPT to assist
with literature organization and with improving the language, clarity, and
structure of the manuscript.  After using this tool, the author reviewed and
edited the content as needed, independently verified the mathematical arguments
and bibliographic information, and takes full responsibility for the content of
the publication.

\end{document}